\documentclass[11pt]{article}
\usepackage{geometry} 
\geometry{letterpaper}  
\usepackage{amsmath,amsthm,amssymb}                
\usepackage{graphicx, color}
\usepackage{amssymb}
\usepackage{epstopdf}
\usepackage{pdfsync}
\usepackage{mathabx}

\newtheorem{definition}{Definition}[section]
\newtheorem{lemma}[definition]{Lemma}
\newtheorem{theorem}[definition]{Theorem}
\newtheorem{proposition}[definition]{Proposition}
\newtheorem{corollary}[definition]{Corollary}
\newtheorem{remark}[definition]{Remark}
\newtheorem{example}[definition]{Example}

\numberwithin{equation}{section}

\def\e{\varepsilon}


\def\dxy{\,dx dy}
\def\dx{\,dx}
\def\dy{\,dy}
\def\wto{\rightharpoonup}

\newcommand{\EEE}{\color{black}}

\newcommand\blfootnote[1]{%
  \begingroup
  \renewcommand\thefootnote{}\footnote{#1}%
  \addtocounter{footnote}{-1}%
  \endgroup
}

\title{
Validity and failure of the integral representation \\
of $\Gamma$-limits of convex non-local functionals
}
\author{Andrea Braides
\ and Gianni Dal Maso
\\ 
SISSA, via Bonomea 265, Trieste, Italy}
\date{}
\begin{document} 

\maketitle

\begin{abstract} We prove an integral-representation result for limits of  non-local  quadratic forms on $H^1_0(\Omega)$, with $\Omega$ a bounded open subset of $\mathbb R^d$, extending  the representation on $C^\infty_c(\Omega)$ given by the Beurling-Deny formula in the theory of Dirichlet forms. We give a counterexample showing that a corresponding representation may not hold if we consider analogous functionals in $W^{1,p}_0(\Omega)$, with $p\neq 2$ and $1<p\le d$.

{\bf MSC codes:} 49J45, 31C25, 46E35, 31B15.

{\bf Keywords:} $\Gamma$-convergence, Dirichlet forms, non-local functionals, integral representation.

\end{abstract}

\blfootnote{Preprint SISSA  06/2023/MATE}

\section{Introduction}
In this paper we continue our investigation on functionals defined on Sobolev spaces in which a   non-local  part,  in the form of a double integral, is present  beside a usual local part depending on the gradient. In general this question can be formulated as the characterization of limits of functionals of the form
$$
F_k(u)=\int_{\Omega\times\Omega}f_k(u(x)-u(y))d\mu_k(x,y)+\int_\Omega g_k(  x, \nabla u(x)) \,dx,
$$
defined in some Sobolev space $W^{1,p}_0(\Omega)$ with $p>1$. Different types of stability of such a class can be studied: in \cite{BDM-1} we have given a notion of convergence on measures $\mu_k$ that guarantees the separate stability of the integrals on $\Omega\times\Omega$   and on $\Omega$, while in \cite{BDM-2} we have explored conditions under which a limit of a sequence of such functionals may still be of this form, but the integrands of the limit    are determined  by   the  interaction between the local and non-local terms. The theory of Dirichlet forms \cite{Fu} gives the stability of such a class under the only condition that $f_k$ and $g_k$ be quadratic,  with very mild conditions on   the  measures  $\mu_k$ (see \cite{Mosco}). 

In this paper we analyze the properties of the $\Gamma$-limits of these quadratic functionals, and show that, rather surprisingly, the same stability property does not hold if we consider only a slight variation of quadratic forms; namely, when all integrands are $p$-th powers with $p\neq 2$.   Previous examples of lack of stability were known in the case of  relaxation results for non-convex functionals $F$,   where the local term is dropped (or, equivalently, if and $f_k=f$ is not convex  and $g_k\equiv 0$). In that case, the lower-semicontinuous envelope in the weak $L^p$ topology of a double integral can be non-representable in the same form \cite{B,KZ,MCT}. Our counterexample shows that a similar issue arises even for sequences of convex and equicoercive functionals  in $W^{1,p}_0(\Omega)$.

In this paper, we first show a result connected to the theory of Dirichlet forms.
If we consider quadratic forms defined in $H^1_0(\Omega)$ of the type 
\begin{equation}\label{new}
\int_{\Omega\times\Omega}|u(x)-u(y)|^2 a_k(x,y)\dxy+\int_\Omega|\nabla u(x)|^2 \,dx,
\end{equation}
with $\Omega$ a bounded open set in $\mathbb R^d$ and $a_k$ positive functions equibounded in $L^1(\Omega\times\Omega)$ and not concentrating on the boundary,  we prove that  the corresponding $\Gamma$-limit,  in the weak topology  of  $H^1_0(\Omega)$,  can be written on functions $u\in C^\infty_c(\Omega)$ in the form
$$
\int_{\Omega\times\Omega}|u(x)-u(y)|^2d\mu(x,y)+\int_\Omega|\nabla u(x)|^2 \,dx,
$$
where $\mu$ is a positive bounded Radon measure (see Theorems \ref{Integral rep} and \ref{stimeme}, and Proposition \ref{vanish}). The main effort is spent in proving the boundedness of such a measure, which does not seem to follow directly from the representation obtained from the Beurling-Deny formula \cite[Theorem 4.5.2]{Fu}.

%

We will show that if $p\neq 2$ (and $1<p\le d$) this does not hold. More precisely, a counterexample can be obtained as follows. Given $x_0\in\Omega$ and a sequence $\e_k$ of positive numbers converging to $0$,  we set
$$
a_k(x,y)= \begin{cases}
|B_{\e_k}(x_0)|^{-1} & \hbox{ if } y\in B_{\e_k}(x_0)\\
0 & \hbox{otherwise,}\end{cases}
$$
where  $B_r(x)$ denotes  the ball of centre $x$ and radius $r$.
In this case the functionals
\begin{equation}\label{fkpi}
F_k(u)=\int_{\Omega\times\Omega}|u(x)-u(y)|^p a_k(x,y)\dxy+\int_\Omega|\nabla u(x)|^p \,dx,
\end{equation}
defined for $u\in W^{1,p}_0(\Omega)$, have a $\Gamma$-limit, in the weak topology of $W^{1,p}_0(\Omega)$, that can be directly expressed on that space as
\begin{equation}\label{effe-p}
F(u)=\int_\Omega| u(x)- m_p(u)|^p\dx+\int_\Omega|\nabla u(x)|^p \,dx,
\end{equation}
where $m_p(u)$ is the unique minimizer of $t\mapsto \int_\Omega |u(x)-t|^p\dx$.
In the proof of this result the inequality $p\le d$ is crucial, implying that a single point has zero $p$-capacity; this also explains the fact that the $\Gamma$-limit is independent of $x_0$.

If $p\neq 2$ we will show that there exist no continuous function $f\colon
\mathbb R^2\to \mathbb R$ and no non-negative bounded Radon measure $\mu$ on $\Omega\times\Omega$ such that
\begin{equation}\label{effe-p-mu}
\int_\Omega| u(x)- m_p(u)|^p\dx= \int_{\Omega\times\Omega}f(u(x),u(y))d\mu(x,y)
\end{equation}
for all $u\in C^\infty_c(\Omega)$ (see Corollary \ref{cor: main}).

Note that the representation of $F$ as above is not in contrast with the representability as a double integral when $p=2$. Indeed, in that case $$m_2(u)={1\over|\Omega|}\int_\Omega u(y)\dy,$$ so that 
\begin{eqnarray}
\int_\Omega| u(x)- m_2(u)|^2\dx&=&\int_\Omega| u(x)|^2\dx-{1\over|\Omega|}\Bigl(\int_\Omega u(y)\dy\Bigr)^2\nonumber\\
&=&{1\over 2|\Omega|}\int_{\Omega\times\Omega}|u(x)-u(y)|^2 \dxy,\label{intro10}
\end{eqnarray}
and $\mu$ is just a multiple of the Lebesgue measure on $\Omega\times\Omega$.

The same observations lead to an example of failure of integral representability in the theory of relaxation.
This can be obtained by considering the functional defined on $C^1_c(\Omega)$    by
$$
F_{x_0}(u)=\int_\Omega| u(x)- u(x_0)|^p\dx+\int_\Omega|\nabla u(x)|^p \,dx,
$$
where $x_0$ is a given point in $\Omega$. Then the lower-semicontinuous envelope with respect to the weak topology of $ W^{1,p}_0(\Omega)$ is given by the same $F$ as in \eqref{effe-p}, so that it cannot be represented in an integral form. Note that the first term in $F_{x_0}(u)$ can be interpreted as an integral on $\Omega\times\Omega$ with respect to the $d$-dimensional Hausdorff measure restricted to $\Omega\times\{x_0\}$, which is the weak limit of the measures $\mu_k= a_k\dxy$ defined above.

\smallskip
The plan of the paper is as follows. Section \ref{Sec2} is dedicated to the quadratic case. We first apply the Beurling-Deny formula to obtain a representation  on $ C^\infty_c(\Omega)$ of the $\Gamma$-limit   $F$  of  the  functionals $F_k$  in \eqref{new}   involving two measures $\mu$ and $\nu$ on $\Omega\times\Omega$ and $\Omega$, respectively (Theorem \ref{Integral rep}). We then analyze some properties of such measures deriving from the estimates satisfied by $F$, proving that both measures are capacitary and finite (Theorems   \ref{stimeme} and \ref{capacity}). Using some additional lower-semicontinuity and truncation properties, satisfied by the $\Gamma$-limit, we then extend the integral representation to the whole  of  $H^1_0(\Omega)$ (Corollary \ref{cor}). 
Section \ref{Sec3} is devoted to the counterexample described above. We show that the $\Gamma$-limit of functionals \eqref{fkpi} is given by \eqref{effe-p}. We then extend \eqref{effe-p-mu} to characteristic functions $u=1_A$ and show that, in this case, the right-hand and left-hand sides of this equality  depend only on the measure of $A$. A careful inspection of the form of this dependence shows that they must be different if $p\neq 2$, concluding the counterexample (Corollary~\ref{cor: main}).

\section{The case $p=2$}\label{Sec2} Throughout the paper $\Omega$ is a connected bounded open subset of $\mathbb R^d$, with $d\ge 1$, even though some limit arguments become trivial if $d=1$. Let $a_k\in L^1(\Omega\times\Omega)$ be non-negative  functions such that 
\begin{equation}\label{estimaak}
\|a_k\|_{L^1(\Omega\times\Omega)}\le M\hbox{ for all } k
\end{equation}
for some $M>0$, and consider  the functionals 
\begin{equation}\label{efffek}
F_k(u):=\int_{\Omega\times\Omega}|u(x)-u(y)|^2 a_k(x,y)\dxy+\int_\Omega|\nabla u(x)|^2 \,dx
\end{equation}
defined for $u\in H^1_0(\Omega)$. 
 Since they are equicoercive in the weak topology of $H^1_0(\Omega)$, we can use the the sequential characterization of $\Gamma$-limits in the weak topology given in \cite[Proposition 8.10\EEE]{DM}.

Note that each $F_k$ satisfies the following {\em truncation property}: $F_k(\Psi(u))\le F_k(u)$ for every $1$-Lipschitz function $ \Psi\colon \mathbb R\to\mathbb R$  with $\Psi(0)=0$ and for every $u\in H^1_0(\Omega)$.
Moreover, \eqref{estimaak} implies
\begin{equation}\label{osc}
F_k(u)\le  M\,({\rm osc}_\Omega u )^2+\int_\Omega |\nabla u(x)|^2\dx,
\end{equation}
where ${\rm osc}_\Omega u:={\rm ess\,sup}_\Omega u-{\rm ess\,inf}_\Omega u$ denotes the oscillation of $u$ on $\Omega$.

\begin{proposition}\label{prop-BD} Assume that $F_k$ $\Gamma$-converges in the weak topology of $H^1_0(\Omega)$ to a functional $F$. Then $F$ satisfies the following properties:

{\rm (a)} the domain of $F$, $D(F):=\{u\in H^1_0(\Omega): F(u)<+\infty\}$, is a linear space containing $H^1_0(\Omega)\cap L^\infty(\Omega)$;

{\rm (b)} $F$ is a quadratic form; that is, there exists a bilinear form $B\colon D(F)\times D(F)\to \mathbb R$ such that
$F(u)= B(u,u)$ for every $u\in D(F)$;

{\rm (c)} the space $D(F)$ endowed with the norm $\|\cdot\|_F$ defined as
\begin{equation}\label{normF}
\|u\|_F=\bigl(\|u\|^2_{L^2(\Omega)}+ F(u)\bigr)^{1/2}
\end{equation}
is a Hilbert space;

{\rm (d)}
we have
$F(\Psi(u))\le F(u)$  for every $1$-Lipschitz function $\Psi\colon \mathbb R\to\mathbb R$ with $\Psi(0)=0$ and for every $u\in H^1_0(\Omega)$;

{\rm (e)}  the space $C^\infty_c(\Omega)$ is dense in $D(F)$  with respect to the norm $\|\cdot\|_F$.
\end{proposition}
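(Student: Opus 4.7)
The plan is to transfer properties of $F_k$ to $F$ via recovery sequences. For (a), homogeneity $F(\lambda u) = \lambda^2 F(u)$ and the bound $F(u+v) \le 2F(u) + 2F(v)$ follow from the corresponding identities/inequalities for $F_k$ by using $\lambda u_k$ and $u_k + v_k$ as admissible test sequences; the inclusion $H^1_0(\Omega) \cap L^\infty(\Omega) \subset D(F)$ is immediate from $\Gamma$-liminf applied to the constant sequence $u_k \equiv u$ together with the bound \eqref{osc}. For (b), the parallelogram identity $F_k(u+v) + F_k(u-v) = 2F_k(u) + 2F_k(v)$ passes to $F$ via $u_k \pm v_k$ as competitors for $u \pm v$; the reverse inequality follows by applying the $\le$ direction to $(u+v)/\sqrt 2$ and $(u-v)/\sqrt 2$ and using homogeneity, so $F$ is quadratic with bilinear form $B(u,v) = \tfrac{1}{2}[F(u+v) - F(u) - F(v)]$. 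For (c), along any recovery sequence $v_k \weak v$ with $F_k(v_k) \to F(v)$, weak $H^1_0$-lower semicontinuity of $\|\nabla \cdot\|_{L^2}^2$ gives $F(v) \ge \liminf \|\nabla v_k\|_{L^2}^2 \ge \|\nabla v\|_{L^2}^2$, so $\|\cdot\|_F$ dominates $\|\cdot\|_{H^1_0}$; a $\|\cdot\|_F$-Cauchy sequence $u_n$ is therefore $L^2$-strong and $H^1_0$-weak convergent to a common $u$, and weak $H^1_0$-lower semicontinuity of $F$ yields $F(u_n - u) \le \liminf_m F(u_n - u_m) \to 0$. For (d), if $F(u) < \infty$ take a recovery sequence $u_k$; since $\Psi$ is 1-Lipschitz with $\Psi(0) = 0$, $\Psi(u_k) \in H^1_0$ is bounded and converges to $\Psi(u)$ in $L^2$ by Rellich, so $\Psi(u_k) \weak \Psi(u)$ in $H^1_0$, and the truncation property of $F_k$ combined with $\Gamma$-liminf gives $F(\Psi(u)) \le \liminf F_k(\Psi(u_k)) \le \lim F_k(u_k) = F(u)$.

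\textbf{Plan for (e).} I proceed in two steps. First I reduce to $u \in D(F) \cap L^\infty$: the truncations $T_N u$ lie in $D(F) \cap L^\infty$ by (a), converge to $u$ in $L^2$ by dominated convergence, and satisfy $F(T_N u) \le F(u)$ by (d); to pass to $\|\cdot\|_F$-convergence I use the parallelogram identity together with $F(T_N u) \to F(u)$, which comes from $L^2$-lower semicontinuity of $F$ (a consequence of $\Gamma$-convergence plus the $H^1_0$-coercivity noted in (c)) via a Kadec--Klee argument in the Hilbert space $(D(F), \|\cdot\|_F)$. Second, for $u \in D(F) \cap L^\infty$, I take a recovery sequence $u_k \weak u$ in $H^1_0$ with $F_k(u_k) \to F(u)$ and, via truncation of each $u_k$, arrange $\|u_k\|_\infty \le \|u\|_\infty$. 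Each $F_k$ is regular in its own graph norm: a mollification with cutoff near $\partial\Omega$ produces $\phi_k^\eta \in C^\infty_c(\Omega)$ approximating $u_k$ strongly in $H^1_0$, with $\|\phi_k^\eta\|_\infty \le \|u_k\|_\infty$ and pointwise a.e.\ convergence, so dominated convergence (using $a_k \in L^1$) gives $F_k(\phi_k^\eta - u_k) \to 0$ as $\eta \to 0$. A diagonal extraction yields $\phi_k \in C^\infty_c(\Omega)$ with $F_k(\phi_k - u_k) \to 0$, and then $\phi_k$ is itself a recovery sequence for $u$.

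\textbf{Main obstacle.} The hard part is deducing $\phi_k \to u$ in $\|\cdot\|_F$, not merely weakly in the Hilbert space. My plan is to use the parallelogram identity $F(\phi_k - u) + F(\phi_k + u) = 2F(\phi_k) + 2F(u)$ and to control each piece via matched recovery sequences. Specifically, $\phi_k + u_k \weak 2u$ in $H^1_0$, and the parallelogram identity for $F_k$ combined with $F_k(\phi_k - u_k) \to 0$ and $F_k(\phi_k), F_k(u_k) \to F(u)$ forces $F_k(\phi_k + u_k) \to 4F(u) = F(2u)$, so $\phi_k + u_k$ is a recovery sequence for $2u$ and by polarization $B_k(\phi_k, u_k) \to F(u)$. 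Weak $H^1_0$-lower semicontinuity of $F$ then matches limsup with liminf to give $F(\phi_k) \to F(u)$ and $F(\phi_k + u) \to 4F(u)$, and the parallelogram identity for $F$ finally yields $F(\phi_k - u) \to 0$. This Kadec--Klee-type matching of liminf with limsup along paired recovery sequences is the key technical ingredient, and an analogous argument handles the truncation reduction in Step 1.
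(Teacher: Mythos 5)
Your treatment of (a)--(d) is correct and is essentially the paper's own (terse) proof with the details filled in, and Step 1 of your proof of (e) (reduction to $D(F)\cap L^\infty(\Omega)$ by truncation, using (d) for the limsup bound and weak lower semicontinuity for the liminf bound, then Kadec--Klee) is sound. The genuine gap is in Step 2 of (e), exactly at what you call the main obstacle. Weak lower semicontinuity of $F$ gives only $\liminf_k F(\phi_k)\ge F(u)$; it cannot ``match limsup with liminf''. To conclude $F(\phi_k)\to F(u)$ you would need $\limsup_k F(\phi_k)\le F(u)$, and nothing in your argument supplies it: the convergence $F_k(\phi_k)\to F(u)$ concerns $F_k$ evaluated at $\phi_k$, while the only pointwise relation between $F$ and the functionals $F_j$ is $F(v)\le\liminf_j F_j(v)$ for each \emph{fixed} $v$, where the limit in $j$ has nothing to do with the index $k$ of $\phi_k$. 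In general the limit functional evaluated along a recovery sequence does \emph{not} converge to the limit value (recovery sequences may carry oscillations that are free for $F_k$ but cost extra energy for $F$ --- the standard homogenization picture), so the chain $F(\phi_k)\to F(u)$, then $F(\phi_k+u)\to 4F(u)$, then $F(\phi_k-u)\to 0$ collapses at its first link.

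The gap is avoidable because strong $\|\cdot\|_F$-convergence is more than the statement needs, and this is the paper's route. By (c), $D(F)$ is a Hilbert space, and the $\|\cdot\|_F$-closure of the linear subspace $C^\infty_c(\Omega)$ is convex and strongly closed, hence weakly closed; therefore it suffices to reach every $u\in D(F)$ by \emph{weak} $D(F)$-convergence of smooth functions. For $u\in D(F)\cap L^\infty(\Omega)$ take any $\phi_k\in C^\infty_c(\Omega)$ with $\phi_k\to u$ strongly in $H^1_0(\Omega)$ and $\|\phi_k\|_{L^\infty(\Omega)}\le\|u\|_{L^\infty(\Omega)}$ --- no recovery sequences, no mollification of $u_k$, no diagonal extraction. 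Then the bound \eqref{estimate-0}, i.e.\ $F(v)\le 4M\|v\|^2_{L^\infty(\Omega)}+\|\nabla v\|^2_{L^2(\Omega;\R^d)}$ on $H^1_0(\Omega)\cap L^\infty(\Omega)$, shows that $\|\phi_k\|_F$ is bounded; since $\phi_k\to u$ in $L^2(\Omega)$, every weak $D(F)$-cluster point is $u$, so $\phi_k\wto u$ weakly in $D(F)$, and $u$ lies in the weak, hence strong, closure of $C^\infty_c(\Omega)$. (Your own $\phi_k$ from Step 2 are also bounded in $\|\cdot\|_F$ by \eqref{estimate-0}, so they too would close the proof this way --- but then all the recovery-sequence machinery is superfluous.) The reduction from $D(F)$ to $D(F)\cap L^\infty(\Omega)$ works by the same soft argument applied to the truncations $\Psi_m(u)$, using (d) only to get boundedness of $\|\Psi_m(u)\|_F$; your Kadec--Klee version of this step is correct but stronger than needed.
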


\begin{proof} (a) If $u\in H^1_0(\Omega)\cap L^\infty(\Omega)$ then \eqref{osc} gives
\begin{equation}\label{estimate-0}
F(u)\le \liminf_{k\to+\infty} F_k(u)\le\bigl(2\|u\|_{L^\infty(\Omega)} \bigr)^2M+\|\nabla u\|_{L^2(\Omega;\mathbb R^d)}^2;
\end{equation}
(b) follows from general properties of $\Gamma$-convergence (see \cite[Theorem 11.10]{DM}); (c)  following from the lower semicontinuity of $F$ by a standard argument; (d) can be obtained from the truncation property of $F_k$ using the definition of $\Gamma$-limit.

As for (e), we first prove that $H^1_0(\Omega)\cap L^\infty(\Omega)$ is contained in the closure of $C^\infty_c(\Omega)$ with respect to the norm $\|\cdot\|_F$. Indeed, for every $u\in H^1_0(\Omega)\cap L^\infty(\Omega)$ there exist $u_k\in C^\infty_c(\Omega)$ converging to $u$ in $H^1_0(\Omega)$ and with $\|u_k\|_{L^\infty(\Omega)}\le \|u\|_{L^\infty(\Omega)}$. From this and \eqref{estimate-0} we deduce that $\|u_k\|_F$ is equibounded, which implies that $u_k\wto u$ weakly in the Hilbert space $D(F)$, showing the desired inclusion.
It remain to prove that $H^1_0(\Omega)\cap L^\infty(\Omega)$ is dense in $D(F)$. To that end, if $u\in D(F)$ we can consider $u_m= \Psi_m(u)$, where $\Psi_m(t)= (m\wedge t)\vee (-m)$ is a truncation operator. Then 
$u_m\in H^1_0(\Omega)\cap L^\infty(\Omega)$, $u_m\to u$ in $H^1_0(\Omega)$, and $F(u_m)\le F(u)$
by (d). This again implies that $\|u_m\|_F$ is equibounded, so that $u_m$ weakly converges to $u$ in $D(F)$, concluding the proof.
\end{proof}

\begin{theorem}\label{Integral rep} Assume that $F_k$ $\Gamma$-converges in the weak topology of $H^1_0(\Omega)$ to a functional $F$. Then there exist two positive Radon measures $\mu$ and $\nu$  on $\Omega\times\Omega$ and  $\Omega$, respectively, such that

{\rm(a)} for $u\in C^\infty_c(\Omega)$
\begin{equation}\label{repr-F}
F(u)=\int_{\Omega\times\Omega}|u(x)-u(y)|^2d\mu(x,y)+\int_\Omega|u(x)|^2 d\nu(x)+\int_\Omega|\nabla u(x)|^2 \,dx;
\end{equation}

{\rm(b)} 
$\mu$ is symmetric; i.e., $\mu(A\times B)=\mu(B\times A)$ for every pair of Borel sets $A$ and $B$ contained in $\Omega$;

{\rm(c)} setting $\Delta:=\{(x,x): x\in\mathbb R^d\}$, we have 
\begin{equation}\label{stimadiag}
\mu((\Omega\times\Omega)\cap\Delta)=0.
\end{equation}

\end{theorem}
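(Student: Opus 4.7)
The plan is to apply the Beurling-Deny-LeJan decomposition of regular symmetric Dirichlet forms to $F$. Proposition \ref{prop-BD} has already verified the required hypotheses: by (b)--(c), $F$ is a densely-defined non-negative closed quadratic form on $D(F)\subset L^2(\Omega)$; by (d), it satisfies the Markov (unit contraction) property; and by (e), together with the classical density of $C^\infty_c(\Omega)$ in $C_0(\Omega)$ in the uniform norm, the form is regular in the sense of Fukushima. Hence \cite[Theorem 4.5.2]{Fu} applies and yields, for every $u\in C^\infty_c(\Omega)\subset D(F)$, the canonical decomposition
$$F(u)=F^{(c)}(u)+\iint_{(\Omega\times\Omega)\setminus\Delta}(u(x)-u(y))^2\,dJ(x,y)+\int_\Omega u(x)^2\,d\kappa(x),$$
where $F^{(c)}$ is the strongly local part, $J$ is a symmetric positive Radon measure on $(\Omega\times\Omega)\setminus\Delta$, and $\kappa$ a positive Radon measure on $\Omega$. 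I would set $\nu:=\kappa$ and let $\mu$ be the trivial extension of $J$ to $\Omega\times\Omega$ obtained by declaring $\mu\big((\Omega\times\Omega)\cap\Delta\big):=0$; then (b) is inherited from the symmetry of $J$ and (c) holds by construction.

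To complete (a), the remaining task is to identify the strongly local part with the Dirichlet integral, namely $F^{(c)}(u)=\int_\Omega|\nabla u(x)|^2\,dx$ for $u\in C^\infty_c(\Omega)$. One direction comes from the pointwise estimate $F_k(u)\geq\int|\nabla u|^2\,dx$ together with the weak $H^1_0$-lower semicontinuity of the Dirichlet integral, which via the $\Gamma$-liminf give $F(u)\geq\int|\nabla u|^2\,dx$ on all of $H^1_0(\Omega)$; combined with the Beurling-Deny decomposition and the strong locality of the Dirichlet integral (whose energy measure $|\nabla u|^2\,dx$ is mutually singular with any purely jumping or killing contribution), this forces $F^{(c)}(u)\geq\int|\nabla u|^2\,dx$. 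For the reverse inequality I would pass to a subsequence along which $a_k\,\dxy$ converges weakly-$*$ to a finite positive Radon measure $\mu_\infty$ on $\overline\Omega\times\overline\Omega$; the constant sequence $u_k\equiv u$ then provides a $\Gamma$-limsup bound
$$F(u)\leq\int_\Omega|\nabla u|^2\,dx+\iint(u(x)-u(y))^2\,d\mu_\infty,$$
and localizing this estimate via test functions supported in small balls where $u$ is nearly affine -- so that the $J$- and $\kappa$-contributions are subdominant compared with the Dirichlet energy -- yields $F^{(c)}(u)\leq\int|\nabla u|^2\,dx$.

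The main obstacle is precisely this identification of $F^{(c)}$: one must rule out that concentration of $a_k\,\dxy$ near the diagonal generates an extra diffusion coefficient in the $\Gamma$-limit. By the general structure theorem for the strongly local part of a regular Dirichlet form, on $C^\infty_c(\Omega)$ the form $F^{(c)}$ admits a representation $\sum_{ij}\int\partial_iu\,\partial_ju\,d\sigma_{ij}$ against a non-negative symmetric matrix of Radon measures; polarizing the two inequalities above through pairs of smooth test functions then pins down $(\sigma_{ij})=\delta_{ij}\,dx$, closing the argument.
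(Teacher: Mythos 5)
Your overall strategy coincides with the paper's: both invoke the Beurling--Deny decomposition (justified by Proposition \ref{prop-BD}, which makes $F$ a regular Dirichlet form with core $C^\infty_c(\Omega)$), both take $\mu$ and $\nu$ to be the jump and killing measures, and both reduce the theorem to the identification $F^c(u)=\int_\Omega|\nabla u(x)|^2\dx$ of the strongly local part --- which you correctly flag as the main obstacle. The gap is that you never actually prove this identification; the two steps you offer in its place are not arguments. For the lower bound, you claim that $F(u)\ge\int_\Omega|\nabla u|^2\dx$ ``forces'' $F^{(c)}(u)\ge\int_\Omega|\nabla u|^2\dx$ by ``mutual singularity of energy measures''. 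But an inequality between two Dirichlet forms does not pass componentwise to their Beurling--Deny decompositions by any general principle: the jump and killing contributions are functionals, not measures on $\Omega$ to which singularity against $|\nabla u|^2\dx$ could be applied, and at this stage nothing excludes that the jump measure has infinite mass near the diagonal (its finiteness is only established later, in Theorem \ref{stimeme}; note also that the trivial extension of a measure that is Radon only on $(\Omega\times\Omega)\setminus\Delta$ need not be Radon on $\Omega\times\Omega$, as the theorem requires). A comparison principle for the local parts is exactly as hard as the statement being proved.

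For the upper bound, the proposed localization ``in small balls where $u$ is nearly affine, so that the $J$- and $\kappa$-contributions are subdominant'' is circular and, worse, quantitatively false for $d\ge2$. The only available control on the non-local and killing parts is of the type \eqref{stima6}, i.e.\ $F^n(v)+F^\ell(v)\le M\,(\mathrm{osc}_\Omega v)^2$; for a test function supported in a ball of radius $r$ with gradient of size $|\xi|$ this bound is of order $M|\xi|^2r^2$, while the Dirichlet energy is of order $|\xi|^2r^d$, so the ``error'' is comparable to the main term when $d=2$ and dominates it when $d\ge3$: shrinking supports does not separate the local part from the rest. The paper's proof overcomes precisely this point by oscillating at high frequency on a \emph{fixed} support rather than shrinking supports: testing with $\varphi(x)=\omega(x)\cos(\lambda\, x\cdot\xi)$ and $\psi(x)=\omega(x)\sin(\lambda\, x\cdot\xi)$ and using $\cos^2+\sin^2=1$, the local parts add up to $F^c(\omega)+\lambda^2\sum_{i,j}\int_\Omega\omega^2\xi_i\xi_j\,d\mu_{ij}$, growing like $\lambda^2$, whereas $F^n$ and $F^\ell$ of these functions remain bounded uniformly in $\lambda$ because their oscillation never exceeds $2\|\omega\|_{L^\infty(\Omega)}$; dividing by $\lambda^2$ and letting $\lambda\to\infty$ gives $\sum_{i,j}\int_\Omega\omega^2\xi_i\xi_j\,d\mu_{ij}=\int_\Omega\omega^2|\xi|^2\dx$, and polarization plus the arbitrariness of $\omega$ yields $\mu_{ij}=\delta_{ij}\mathcal L^d$. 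Without this (or an equivalent) scale-separation device, your proof does not close.
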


\begin{proof} From the previous proposition it follows that the bilinear form $B$  defined therein  is a {\em Dirichlet form} (see \cite{Fu}), and that $C^\infty_c(\Omega)$ is a {\em core} for $B$. Consequently by the Beurling-Deny Theorem (\cite[Theorem 2.2.2]{Fu}), we have the decomposition
$$
F(u)= F^n(u)+F^{\ell}(u)+  F^c(u) \hbox{ for every $u\in C^\infty_c(\Omega)$},
$$
where $F^{\ell}$ is a local term, $F^n$ is a non-local term, and  $F^c$ is a local term depending only on the derivatives. More precisely, there exist a symmetric matrix of Radon measures $\mu_{ij}$, and two positive Radon measures $\mu$ and $\nu$ such that
\begin{eqnarray}\label{BD-1}
&&F^n(u)=\int_{\Omega\times\Omega} |u(x)-u(y)|^2d\mu(x,y),
\quad F^{\ell}(u)=\int_\Omega|u(x)|^2 d\nu(x),\\ \label{BD-1}
 &&F^c(u)=\sum_{i,j=1}^d\int_\Omega{\partial u(x)\over\partial x_i}{\partial u(x)\over\partial x_j}d\mu_{ij}(x)
\end{eqnarray}
for every $u\in C^\infty_c(\Omega)$. Note that it is not restrictive to suppose that $\mu$ is symmetric and 
\eqref{stimadiag} holds since $|u(x)-u(y)|^2=0$ on $\Delta$.

We now show that in our case  $\displaystyle F^c(u)=\int_\Omega|\nabla u(x)|^2\dx$.
Note that
\begin{eqnarray}\label{stimatre}
&&\int_\Omega |\nabla u(x)|^2\dx\le F(u)\le  M\,({\rm osc}_\Omega u)^2+\int_\Omega |\nabla u(x)|^2\dx\end{eqnarray}
for every $u\in C^\infty_c(\Omega)$,
using the lower semicontinuity of the first integral for the first inequality and \eqref{osc} for the second one.

Given $\omega\in C^\infty_c(\Omega)$ and $\xi\in\mathbb R^d$, let $\varphi$ and $\psi$ be defined by
$$
\varphi (x)=\omega(x)\cos(x\cdot\xi) \ \hbox{ and }\ \psi (x)=\omega(x)\sin(x\cdot\xi).$$
By a direct computation we have
\begin{eqnarray*}
&&F^c(\varphi )+ F^c(\psi )= F^c(\omega)+\sum_{i,j=1}^d\int_\Omega \omega^2(x)\xi_i\xi_jd\mu_{ij}(x),
\\
&&\int_\Omega|\nabla\varphi (x)|^2\dx+ \int_\Omega|\nabla\psi (x)|^2\dx= \int_\Omega|\nabla\omega(x)|^2\dx+\int_\Omega \omega^2(x)|\xi|^2dx,
\end{eqnarray*}
so that
%
\begin{eqnarray*}
&&\Big|\sum_{i,j=1}^d\int_\Omega \omega^2(x)\xi_i\xi_jd\mu_{ij}(x)-\int_\Omega \omega^2(x)|\xi|^2dx\Big|\\
&\le&\Big|F(\varphi )-\int_\Omega|\nabla\varphi (x)|^2\dx\Big|+\Big|F(\psi )-\int_\Omega|\nabla\psi (x)|^2\dx\Big|+\Big|F(\omega)-\int_\Omega|\nabla\omega(x)|^2\dx\Big|\\
&&+ F^\ell(\varphi )+ F^\ell(\psi )+F^\ell(\omega)+ F^n(\varphi )+F^n(\psi )+F^n(\omega)
\end{eqnarray*}

Using \eqref{stimatre} we get
\begin{eqnarray}\label{stima4}\nonumber
&&\Big|\sum_{i,j=1}^d\int_\Omega \omega^2(x)\xi_i\xi_jd\mu_{ij}(x)-\int_\Omega \omega^2(x)|\xi|^2dx\Big|\\
&&\le12 M \|\omega\|^2_{L^\infty(\Omega)} +3 F^\ell(\omega)+ F^n(\varphi )+F^n(\psi )+F^n(\omega).
\end{eqnarray}
Let $K={\rm supp}\,\omega$. We observe that
\begin{eqnarray*}
F^n(\varphi )&=&\int_{K\times K} |\omega(x)\cos(x\cdot\xi) - \omega(y)\cos(y\cdot\xi)|^2d\mu(x,y)\\
&& +\int_{K\times(\Omega\setminus K)} |\omega(x)\cos(x\cdot\xi)|^2d\mu(x,y)
 +\int_{(\Omega\setminus K)\times K} |\omega(y)\cos(y\cdot\xi)|^2d\mu(x,y)\\
 &\le & 4\|\omega\|^2_{L^\infty(\Omega)}\mu(K\times K)+\int_{K\times(\Omega\setminus K)} |\omega(x)|^2d\mu(x,y)
 +\int_{(\Omega\setminus K)\times K} |\omega(y)|^2d\mu(x,y)
 \\
 &\le & 4\|\omega\|^2_{L^\infty(\Omega)}\mu(K\times K)+\int_{\Omega\times\Omega} |\omega(x)-\omega(y)|^2d\mu(x,y)
  \\
 &\le& 4\|\omega\|^2_{L^\infty(\Omega)}\big(\mu(K\times K)+M\big) +\int_{\Omega}|\nabla\omega(x)|^2\dx,
\end{eqnarray*}
where in the last inequality we again use \eqref{stimatre}. Using the analogous estimate for $F^n(\psi )$, from 
\eqref{stimatre} and \eqref{stima4} we get
\begin{eqnarray*}\nonumber
&&\hskip-1cm\bigg|\sum_{i,j=1}^d\int_\Omega \omega^2(x)\xi_i\xi_jd\mu_{ij}(x)-\int_\Omega \omega^2(x)|\xi|^2dx\bigg|\\
&& \le \|\omega\|^2_{L^\infty(\Omega)}(8\mu(K\times K) +20 M)+5\int_{\Omega}|\nabla\omega(x)|^2\dx.
\end{eqnarray*}
Applying this estimate with $\xi$ replaced by $\lambda\xi$ we obtain
\begin{eqnarray*}\nonumber
&&\hskip-2cm\bigg|\sum_{i,j=1}^d\int_\Omega \omega^2(x)\xi_i\xi_jd\mu_{ij}(x)-\int_\Omega \omega^2(x)|\xi|^2dx\bigg|\\
 &\le& {1\over \lambda^2}\Big(\|\omega\|^2_{L^\infty(\Omega)}(8\mu(K\times K) +20 M)+5\int_{\Omega}|\nabla\omega(x)|^2\dx\Big).
\end{eqnarray*}
Letting $\lambda\to+\infty$ we then have 
$$
\sum_{i,j=1}^d\int_\Omega \omega^2(x)\xi_i\xi_jd\mu_{ij}(x)=\int_\Omega \omega^2(x)|\xi|^2\dx.
$$
By polarization we have 
$$
\sum_{i,j=1}^d\int_\Omega \omega^2(x)\xi_i\eta_jd\mu_{ij}(x)=\int_\Omega \omega^2(x)\xi\cdot\eta\dx \qquad\hbox{ for all }\xi, \eta\in \mathbb R^d.
$$
Taking $\xi,\eta\in\{e_1,\ldots, e_d\}$ and using the arbitrariness of $\omega$, we obtain that $\mu_{ij}=0$ for $i\neq j$ and $\mu_{ii}=\mathcal L^d$, so that $F^c(u)=\int_\Omega|\nabla u(x)|^2\dx$. Using this we also have
\begin{equation}\label{stima6}
F^n(u)+F^\ell(u)\le M\,({\rm osc}_\Omega(u))^2 
\end{equation} 
for all $u\in C^\infty_c(\Omega)$. \end{proof}

The following example shows that  a non-trivial measure measure $\nu$ may indeed appear in the limit.

\begin{example}\rm
Let $d=2$ and $\Omega=(0,1)\times (0,1)$, with $a_k(x,y)= \alpha_k(x)+\alpha_k(y)$, where 
$$
\alpha_k(x)=\begin{cases} 
k &\hbox{ if }x\in R_k:=(0,1)\times(0,{1\over k}),\\
0 &\hbox{ otherwise.}
\end{cases}
$$
Then \eqref{estimaak} is satisfied with $M=2$. We now show that the corresponding $F_k$ converge to the functional given by
$$
F(u)=2\int_\Omega|u(x)|^2\dx+\int_\Omega|\nabla u(x)|^2\dx,
$$
which corresponds to $\mu=0$ and $\nu=2{\cal L}^d$.

In order to prove the liminf inequality we fix $u_k$ converging weakly to $u$ in $H^1_0(\Omega)$.
We have
\begin{eqnarray}\label{coco}\nonumber
&&\int_{\Omega\times\Omega}|u_k(x)-u_k(y)|^2a_k(x,y)\dxy\\
&&=2k\int_{R_k}|u_k(x)|^2\dx
-4k\int_{R_k}u_k(x)\dx\int_\Omega
u_k(y)\dy+ 2\int_\Omega|u_k(y)|^2\dy.
\end{eqnarray}
Moreover, using a Poincar\'e-inequality argument in $R_k$, we obtain
\begin{equation}\label{poin}
k\int_{R_k}|u_k(x)|^2\dx\le {1\over k} \int_{R_k}|\nabla u_k(x)|^2\dx.
\end{equation}
This gives
\begin{equation}\label{limo-0}
\lim_{k\to +\infty} k\int_{R_k}|u_k(x)|^2\dx=0
\end{equation}
since the right-hand side in \eqref{poin} converges to $0$. By H\"older's inequality we also obtain
\begin{equation*}\label{limo}
\lim_{k\to +\infty} k\int_{R_k}|u_k(x)|\dx=0.
\end{equation*}
These limits imply that, by \eqref{coco},
\begin{equation*}\label{limo-2}
\lim_{k\to +\infty}\int_{\Omega\times\Omega}|u_k(x)-u_k(y)|^2a_k(x,y)\dxy=2\int_\Omega|u(x)|^2\dx.
\end{equation*}
By the lower semicontinuity of the gradient term this shows that
$\liminf\limits_{k\to +\infty} F_k(u_k)\ge F(u)$.
On the other hand, \eqref{coco} with $u_k=u$ shows that $\lim\limits_{k\to+\infty} F_k(u)=F(u)$, completing the proof of the $\Gamma$-convergence of $F_k$ to $F$.
\end{example}

We now analyze the properties of the measures $\mu$ and $\nu$ given in Theorem \ref{Integral rep} in order to extend the representation result to the whole $H^1_0(\Omega)$.

\begin{theorem}\label{stimeme} Let $F\colon C^\infty_c(\Omega)\to [0,+\infty)$ be such that there exist exist two positive Radon measures $\mu$ and $\nu$  on $\Omega\times\Omega$ and  $\Omega$, respectively,  
that satisfy {\rm(a)}, {\rm(b)}, and {\rm(c)} of Theorem~{\rm\ref{Integral rep}}.
Suppose in addition that there exists $M>0$ such that
\begin{equation}\label{stima6.1}
F(u)\le  M\,({\rm osc}_\Omega u)^2 +\int_\Omega|\nabla u(x)|^2\dx
\end{equation}
for all $u\in C^\infty_c(\Omega)$.
Then 

{\rm(a)} the measures $\mu$ and $\nu$ are uniquely determined;

{\rm(b)} $\mu(\Omega\times\Omega)<+\infty$ and $\nu(\Omega)<+\infty$.
\end{theorem}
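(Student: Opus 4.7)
The plan is to treat uniqueness (a) and finiteness (b) separately; (b) is the more delicate step, and the finiteness of $\mu$ is where the main difficulty lies.

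For (a), given two pairs $(\mu_1,\nu_1)$ and $(\mu_2,\nu_2)$ satisfying the representation, I would set $\sigma=\mu_1-\mu_2$ and $\rho=\nu_1-\nu_2$, which are signed Radon measures with $\sigma$ symmetric and vanishing on $\Delta$. The associated null quadratic form, after polarization and evaluation on test functions $u,v\in C^\infty_c(\Omega)$ with disjoint compact supports, would give $\int u(x)v(y)\,d\sigma(x,y)=0$: indeed $uv\equiv0$, $(u(x)-u(y))(v(x)-v(y))$ collapses to $-u(x)v(y)-u(y)v(x)$, and the two pieces coincide by symmetry of $\sigma$. Since products $U\times V$ with $\overline U\cap\overline V=\emptyset$ generate the Borel $\sigma$-algebra of $(\Omega\times\Omega)\setminus\Delta$, and $\sigma(\Delta)=0$, we conclude $\sigma=0$; then $\int u^2\,d\rho=0$ for all $u\in C^\infty_c(\Omega)$ forces $\rho=0$.

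For (b), I would first combine the representation of $F$ with hypothesis~\eqref{stima6.1}, cancelling the gradient terms, to obtain
$$\int_{\Omega\times\Omega}|u(x)-u(y)|^2\,d\mu+\int_\Omega u^2\,d\nu\le M\,(\mathrm{osc}_\Omega u)^2 \qquad \forall\, u\in C^\infty_c(\Omega).$$
Bounding $\nu$ is then easy: take $\phi\in C^\infty_c(\Omega)$ with $\phi\equiv 1$ on a compact $K\subset\Omega$ and $0\le\phi\le1$; since $\mathrm{osc}\,\phi\le 1$, one obtains $\nu(K)\le M$, and exhausting $\Omega$ by compacts yields $\nu(\Omega)\le M$.

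Bounding $\mu$ is the main obstacle. The naive idea of applying the above inequality to a test function approximating $\chi_A$ gives only $\mu(A\times(\Omega\setminus A))\le M/2$ for a single disjoint pair, a bound that does not survive summation over a partition because of combinatorial blow-up. My plan is a probabilistic averaging trick. Fix $\Omega'\subset\subset\Omega$ and a finite Borel partition $\{A_1,\dots,A_N\}$ of $\Omega'$ chosen so that the first marginal of $\mu$ does not charge any $\partial A_i$ (arrangeable by a generic grid offset, since that first marginal is a Radon measure on $\Omega$). For each $\omega\in\{0,1\}^N$ set $A_\omega:=\bigcup_{\omega_i=1}A_i$; mollification of $\chi_{A_\omega}$ in $C^\infty_c(\Omega)$ with values in $[0,1]$, together with Fatou's lemma, yields $2\mu(A_\omega\times(\Omega\setminus A_\omega))\le M$. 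Averaging over $\omega$ uniform in $\{0,1\}^N$ and using $A_i\subset\Omega'$,
$$\frac{M}{2}\ge \mathbb E[\mu(A_\omega\times(\Omega'\setminus A_\omega))]=\frac{1}{4}\sum_{i\ne j}\mu(A_i\times A_j)=\frac{1}{4}\bigl(\mu(\Omega'\times\Omega')-\sum_i\mu(A_i\times A_i)\bigr),$$
so $\mu(\Omega'\times\Omega')\le 2M+\sum_i\mu(A_i\times A_i)$. Refining the partition in a nested way with diameters $\to 0$, the diagonal piece $\sum_i\mu(A_i\times A_i)$ is the $\mu$-measure of a decreasing sequence of neighborhoods of $\Delta\cap(\Omega'\times\Omega')$ and converges to $0$ by hypothesis (c) of Theorem~\ref{Integral rep} combined with local finiteness of $\mu$. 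Hence $\mu(\Omega'\times\Omega')\le 2M$, and letting $\Omega'\uparrow\Omega$ gives $\mu(\Omega\times\Omega)\le 2M$. The key observation is that uniform random dichotomies weight each off-diagonal pair $(A_i,A_j)$ by exactly $1/4$, converting the pairwise bound $M/2$ into a uniform global bound independent of $N$.
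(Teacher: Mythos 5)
Your proposal is correct in substance, and on the heart of the matter --- the finiteness of $\mu$ --- it takes a genuinely different route from the paper. Both proofs rest on the same key estimate: subtracting the gradient term from \eqref{stima6.1} and \eqref{repr-F} and testing on $[0,1]$-valued smooth approximations of characteristic functions gives $\mu\bigl((A\times(\Omega\setminus A))\cup((\Omega\setminus A)\times A)\bigr)\le M$ for suitable sets $A$, and the whole difficulty is to aggregate these dichotomy bounds so as to cover $(\Omega\times\Omega)\setminus\Delta$ with bounded multiplicity. The paper does this deterministically: it introduces the two-parameter family of periodic checkerboard sets $E_{\alpha,\beta}$, shows by an explicit geometric computation (the function $\gamma_z$ and the triangle comparison) that averaging over $(\alpha,\beta)\in D_\e$ covers every point off the strip $\Delta_\eta$ with density at least ${1\over 2}-\eta$, and treats $d>1$ coordinate-wise, ending with $\mu(\Omega\times\Omega)\le 2dM$. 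You average instead over uniform random subsets of a fine partition, which weights every off-diagonal pair of cells by exactly ${1\over 4}$; this replaces the geometric density estimate by a one-line expectation computation, is dimension-free, and even yields the sharper bound $2M$. The near-diagonal remainder is killed by hypothesis (c) of Theorem~\ref{Integral rep} in both proofs (the paper via an increasing limit over the complements of $\Delta_\eta$, you via continuity from above on $\Omega'\times\Omega'$, which is legitimate precisely because $\Omega'\subset\subset\Omega$ makes that set of finite $\mu$-measure). For part (a) and for $\nu$, your argument is essentially the paper's: the paper's quantity $F(u_k)+F(v_k)-F(u_k+v_k)$ for disjointly supported approximations of $1_A$, $1_B$ is exactly your polarized form, and its extension from disjoint sets to all Borel rectangles uses finiteness and \eqref{stimadiag} just as your $\pi$-system argument does.

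Two points need repair, both minor. First, ordering: the signed measures $\sigma=\mu_1-\mu_2$, $\rho=\nu_1-\nu_2$ and the polarization identity only make sense once the measures are known to be finite, so (b) must be proved first and invoked in (a); this is harmless since your proof of (b) nowhere uses (a), and the paper proceeds in the same order. Second, your justification for choosing the grid so that the marginal of $\mu$ does not charge the cell boundaries is not correct as stated: before finiteness is proved, $B\mapsto\mu(B\times\Omega)$ need not be a Radon measure on $\Omega$ (mass may accumulate as $y\to\partial\Omega$ above a fixed compact set of $x$'s, e.g.\ $\mu=\sum_k k\,\delta_{(x_0,y_k)}$ with $y_k\to\partial\Omega$). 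What is true, and suffices, is that $\mu$ restricted to $\Omega'\times\Omega$ is $\sigma$-finite (exhaust the second factor by compacts and use local finiteness of $\mu$), so its marginal on $\Omega'$ charges at most countably many of the disjoint hyperplanes $\{x_i=s\}$, and a generic offset works; the same remark lets you refine dyadically. Relatedly, for a partition of a fixed $\Omega'$ the boundaries $\partial A_i$ contain pieces of $\partial\Omega'$, which must also be unchargeable --- easiest is to let $\Omega'$ itself be the union of the grid cells whose closure lies in $\Omega$, so that all cell boundaries are grid hyperplanes. With these adjustments your argument is complete. Note that the paper sidesteps this whole issue by using half-open cells and approximating sequences that converge pointwise \emph{everywhere} on $\Omega$, so no condition on $\mu$ relative to the grid is ever needed; that is a small robustness advantage of the deterministic construction.
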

\begin{proof}
We begin by proving that $\nu$ is a finite measure. From \eqref{stima6.1} and \eqref{repr-F} we first obtain
$$
\int_\Omega|u(x)|^2d\nu(x)\le M\,({\rm osc}_\Omega u)^2 \hbox{ for all $u\in C^\infty_c(\Omega)$. }
$$
Approximating the constant $1$ by an increasing sequence of non-negative functions  $u_k\in C^\infty_c(\Omega)$ we obtain that $\nu(\Omega)\le M$. 

\smallskip
We now complete the proof of claim (b), showing that the measure $\mu$ is finite. We preliminarily note that
from \eqref{stima6.1} and \eqref{repr-F} we also obtain
\begin{equation}\label{estimu}
\int_{\Omega\times\Omega}|u(x)-u(y)|^2d\mu(x,y)\le M\,({\rm osc}_\Omega u)^2 \hbox{ for all $u\in C^\infty_c(\Omega)$. }
\end{equation}

Since the proof is rather complex we first consider the case $d=1$, hoping it may clarify the arguments used.
For given $\eta>0$ we let
\begin{equation}\label{diag-eta}
\Delta_\eta:=\{(x,y)\in \mathbb R^d\times  \mathbb R^d: |x-y|\le\eta\},
\end{equation}
and cover $(\Omega\times \Omega)\setminus \Delta_\eta$ `in the average' by a family of `checkerboard-type' sets depending on two parameters $\alpha$ and $\beta$, showing that the covering has some average properties independent of $\eta$.

With given $\alpha, \beta\in\mathbb R$, with $\beta>0$, we define 
\begin{equation}\label{Aab}
A_{\alpha,\beta}=\bigcup_{h\!\ \rm even} [\alpha+h\beta,\alpha+(h+1)\beta),
\end{equation}
and
\begin{equation}\label{Eab}
E_{\alpha,\beta}=\bigcup_{h+k\!\ \rm odd} ([\alpha+h\beta,\alpha+(h+1)\beta)\times [\alpha+k\beta,\alpha+(k+1)\beta)).
\end{equation}
Note that $E_{\alpha+k \beta, \beta}=E_{\alpha, \beta}$ for all $k\in \mathbb Z$ and \begin{equation}\label{ezb}E_{\alpha, \beta}= (\alpha,\alpha)+E_{0,\beta};
\end{equation} moreover, we have 
\begin{equation}\label{equation11}
(\Omega\times\Omega)\cap E_{\alpha,\beta}=
((\Omega\cap A_{\alpha,\beta})\times (\Omega\setminus A_{\alpha,\beta}) )
\cup 
((\Omega\setminus A_{\alpha,\beta})\times (\Omega\cap A_{\alpha,\beta}) ).
\end{equation}

We claim that 
\begin{equation}\label{stima12}
\mu((\Omega\times\Omega)\cap E_{\alpha,\beta})\le M,
\end{equation}
where $M$ is defined in \eqref{stima6.1}. 
To prove the claim we take a sequence $u_k\in C^\infty_c(\Omega)$ such that $0\le u_k\le 1$ and $u_k(x)\to 1_{A_{\alpha,\beta}}(x)$ for all $x\in\Omega$. We then have 
\begin{equation}\label{stima10}
\lim_{k\to+\infty}\int
_{(\Omega\cap A_{\alpha,\beta})\times (\Omega\setminus A_{\alpha,\beta})}
|u_k(x)-u_k(y)|^2d\mu(x,y)= \mu((\Omega\cap A_{\alpha,\beta})\times (\Omega\setminus A_{\alpha,\beta})).
\end{equation}
If this latter measure is finite this limit is obtained by using the Dominated Convergence Theorem; otherwise, it follows by applying Fatou's Lemma.
From \eqref{estimu}, \eqref{stima10}, and the analogous limit for  $\mu((\Omega\setminus A_{\alpha,\beta})\times (\Omega\cap A_{\alpha,\beta}))$, we obtain the claim thanks to \eqref{equation11}.

The next argument is, given $\eta>0$, to determine $\e >0$ such that, setting 
$D_{\e }=\{(\alpha,\beta): 0\le \alpha<\beta,\ \e \le \beta<2\e \}$, we have
  \begin{equation}\label{stima11}
\int_{D_{\e }}1_{E_{\alpha,\beta}}(x,y)d\alpha d\beta\ge \Big({1\over 2}-\eta\Big)|D_{\e }|
\end{equation}
for every $(x,y)\in(\Omega\times\Omega)\setminus\Delta_\eta$, where $\Delta_\eta$ us defined in \eqref{diag-eta}. Once \eqref{stima11} is proved we obtain
\begin{eqnarray*}
\Big({1\over 2}-\eta\Big)|D_{\e }|\mu((\Omega\times\Omega)\setminus\Delta_\eta)
&\le&\int_{\Omega\times\Omega}\int_{D_{\e }}1_{E_{\alpha,\beta}}(x,y)d\alpha d\beta d\mu(x,y)\\
&=&\int_{D_{\e }}\int_{\Omega\times\Omega}1_{E_{\alpha,\beta}}(x,y)d\mu(x,y)d\alpha d\beta\\
&\le&|{D_{\e }}|\mu(E_{\alpha,\beta})\le |{D_{\e }}| M
\end{eqnarray*}
by \eqref{stima12}. Dividing by $|{D_{\e }}|$ we obtain that
\begin{eqnarray*}
\Big({1\over 2}-\eta\Big)\,\mu((\Omega\times\Omega)\setminus\Delta_\eta)\le M.
\end{eqnarray*} Taking into account \eqref{stimadiag}
we obtain 
\begin{equation}
\mu(\Omega\times\Omega)\le 2 M
\end{equation}
by the arbitrariness of $\eta$, concluding the proof of the boundeness of $\mu$. 


It remains to prove \eqref{stima11}. By Fubini's Theorem we have  
$$
\int_{D_{\e }}1_{E_{\alpha,\beta}}(x,y)d\alpha d\beta=\int_{\e }^{2\e } \mathcal L^1(\{\alpha\in [0,\beta) : (x,y)\in E_{\alpha, \beta}\})d\beta.
$$
We now claim that $\mathcal L^1(\{\alpha\in [0,\beta) : (x,y)\in E_{\alpha, \beta}\})$ depends on $z:=y-x$ 
and that, setting
$$
\gamma_z(\beta):=\mathcal L^1(\{\alpha\in [0,\beta) : (x,y)\in E_{\alpha, \beta}\}),
$$we have
$$
\gamma_z(\beta)=|z-2m_\beta(z)\beta|,
$$
where $m_\beta(z)\in\mathbb Z$ is the unique integer such that
$$
(2m_\beta(z)-1)\beta\le z< (2m_\beta(z)+1)\beta.
$$ 

We first observe that, by the periodicity of $E_{\alpha,\beta}$, the set $\{\alpha\in\mathbb R: (x,y)\in E_{\alpha, \beta}\}$ is a periodic subset of $\mathbb R$ of period $\beta$. Moreover, using \eqref{ezb}, we have 
$$
\{\alpha\in\mathbb R: (x,y)\in E_{\alpha, \beta}\}= \{\alpha\in\mathbb R: (0,z)\in E_{\alpha, \beta}\}+\alpha_\beta(x),$$
with $\alpha_\beta(x):=x-\beta\lfloor {x\over\beta}\rfloor$, where $\lfloor\cdot \rfloor$ denotes the integer part; hence,
$$
\mathcal L^1(\{\alpha\in[0,\beta): (x,y)\in E_{\alpha, \beta}\})= \mathcal L^1(\{\alpha\in[0,\beta): (0,z)\in E_{\alpha, \beta}\})
$$
by the periodicity of the set, which proves the fact that $\gamma_z$ indeed depends only on $z$.

 \begin{figure}[h!!]
\centerline{\includegraphics[width=.3\textwidth]{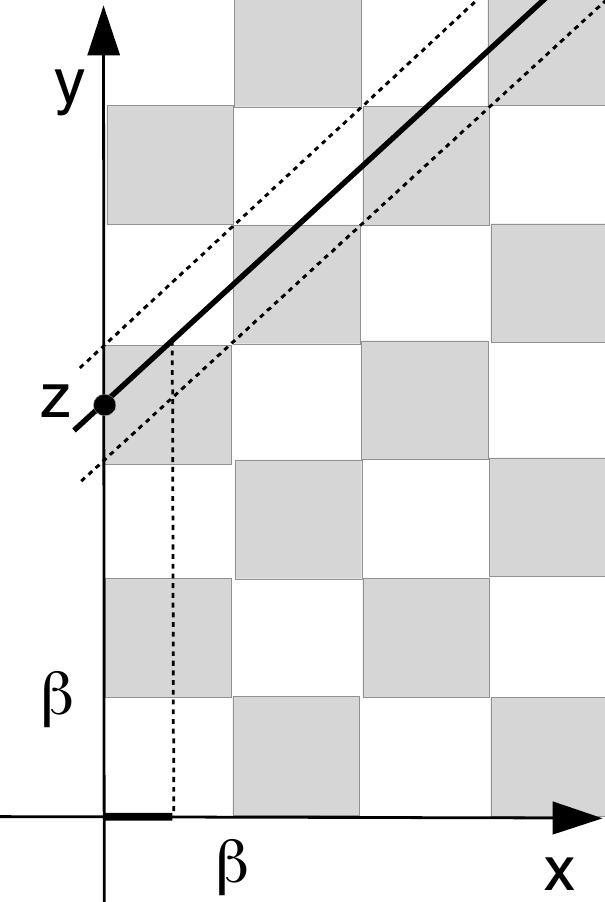}}
\caption{geometrical interpretation of $\gamma_z$; the grey zone represents $E_{0,\beta}$.}
\label{Figure1}
\end{figure}
We now observe that, using the periodicity of $E_{0,\beta}$, we can write
$$
\gamma_z(\beta)= \mathcal L^1(\{\alpha\in[0,\beta): (0,z)\in (\alpha,\alpha)+ E_{0, \beta}\})=
 \mathcal L^1(\{\alpha\in[0,\beta): (0,z)+(\alpha,\alpha)\in  E_{0, \beta}\}),
$$
so that 
we have
$\gamma_z(\beta)=0$ when $|z|= m\beta$ for $m$ even and $\gamma_z(\beta)=\beta$ for  $|z|=\beta m$ for $m$ odd. Otherwise, the function $\gamma_z$ is the piecewise-affine interpolation determined by these conditions on $\beta\mathbb Z$. In Fig.~\ref{Figure1} we give a pictorial representation of the value $\gamma_z(\beta)$, given by the length of the projection on  the $x$-coordinate axis of the intersection of the line $\{(0,z)+(\alpha,\alpha):\alpha\in \mathbb R\}$ with $E_{0, \beta}\cap([0,\beta)\times\mathbb R)$. In the figure we have pictured the cases of a generic $z$ and of the two possibilities, $z$ even or odd, when $z\in\mathbb Z$ (dashed lines).

In terms of $\gamma_z$,  \eqref{stima11} is equivalent to 
  \begin{equation}\label{stima15}
\int_{\e }^{2\e } \gamma_z(\beta)d\beta\ge \Big({1\over 2}-\eta\Big)|D_{\e }|.
\end{equation}
Note that $|D_{\e }|={3\over 2}\e ^2$, but in most of the computations we do not need this explicit value.

 \begin{figure}[h!!]
\centerline{\includegraphics[width=.5\textwidth]{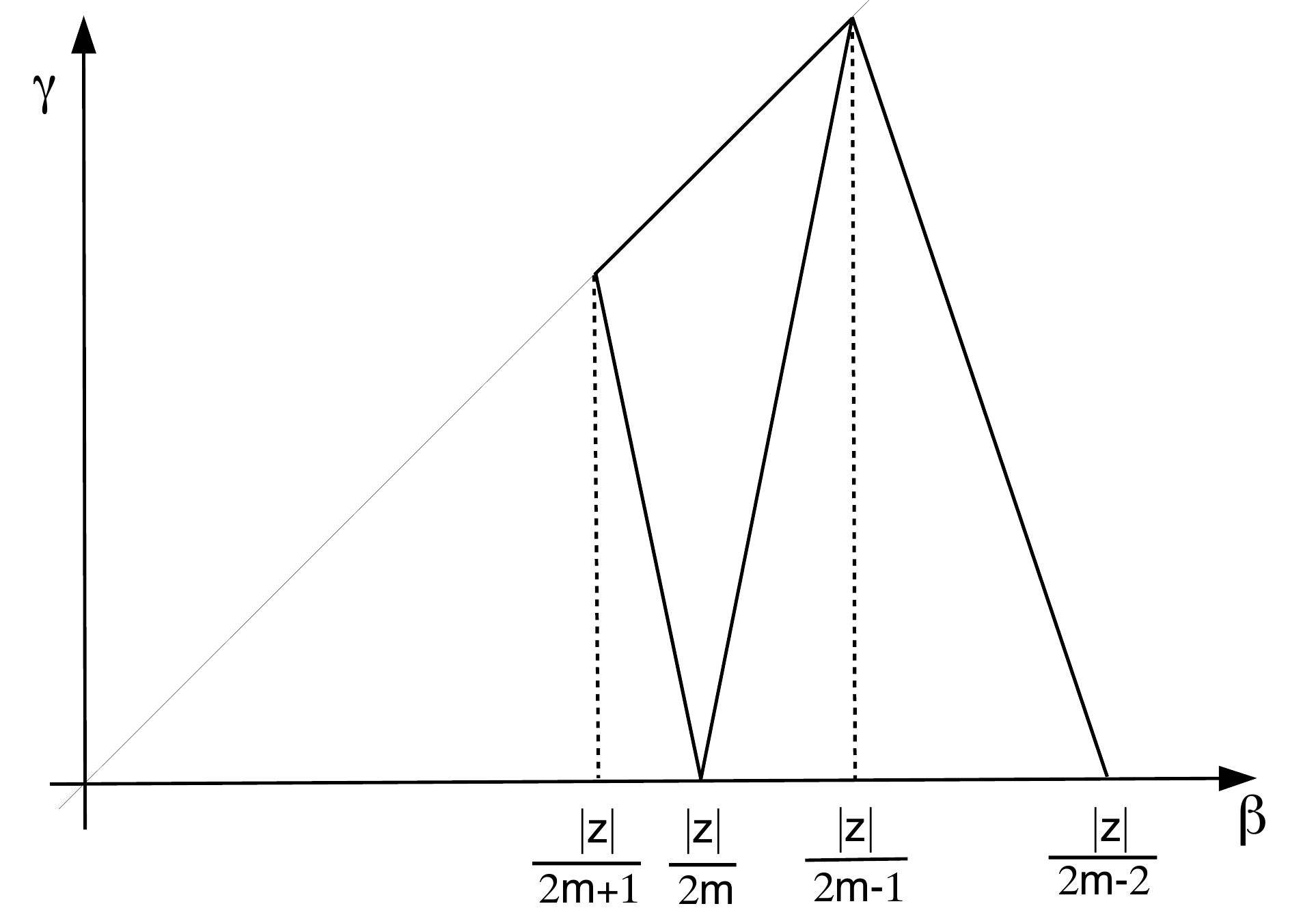}}
\caption{a part of the graph of $\gamma_z$.}
\label{Figure2}
\end{figure}
In order to prove \eqref{stima15} note that
\begin{equation}\label{stima40}
\int_{|z|\over 2m}^{|z|\over 2m-2}\gamma_z(\beta)d\beta
\ge
\int_{|z|\over 2m+1}^{|z|\over 2m-1}(\beta-\gamma_z(\beta))d\beta
\end{equation}
for every $m\in\mathbb N$ with $m\ge 2$ (this is just a comparison between the areas of the two triangles in Fig.~\ref{Figure2}). We define $h=h(|z|,\e )$ and $k=k(|z|,\e )$ setting
$$
h=\max\Big\{m\in\mathbb N: \e \le{|z|\over 2m}\Big\}, \qquad k=\min\Big\{m\in\mathbb N: {|z|\over 2m}<2\e \Big\},
$$
and observe that $k\le h$. From \eqref{stima40}, summing on the set of integer $m$ with $k< m\le h$ we deduce that
\begin{eqnarray*}\nonumber
\int_{\e }^{2\e }\gamma_z(\beta)d\beta&\ge& \int_{|z|\over 2h}^{|z|\over 2k}\gamma_z(\beta)d\beta
\\
&\ge&
\int_{|z|\over 2h+1}^{|z|\over 2k+1}(\beta-\gamma_z(\beta))d\beta\ge \int_{\e }^{2\e }(\beta-\gamma_z(\beta))d\beta-|z|{2\e \over k^2} ,
\end{eqnarray*}
so that 
\begin{equation}\label{stima41}
\int_{\e }^{2\e }\gamma_z(\beta)d\beta\ge {1\over 2}|D_{\e }|-|z|{\e \over k^2}=
|D_{\e }|\Big({1\over 2} -{2|z|\over 3k^2 \e }\Big).
\end{equation}
Note that 
${1\over k}<{4\e \over |z|}$, so that 
${2|z|\over 3k^2 \e }\le {32 \e \over 3 |z|}$.  By choosing $\e <{3\over 32}\eta^2$ we then have ${2|z|\over 3k^2 \e }\le\eta$ for all $z$ with $|z|\ge \eta$, and estimate \eqref{stima15} holds.

\smallskip
If $d>1$, for all $i\in\{1,\ldots, d\}$ and $\alpha, \beta\in\mathbb R$, with $\beta>0$, we consider the set
$$
E^i_{\alpha,\beta}=\{(x,y)\in \mathbb R^d\times \mathbb R^d: (x_i,y_i)\in E_{\alpha,\beta}\},
$$
where $E_{\alpha,\beta}$ is defined in \eqref{Eab}. Correspondingly, we define $$A^i_{\alpha,\beta}=\{(x,y)\in \mathbb R^d\times \mathbb R^d: (x_i,y_i)\in A_{\alpha,\beta}\},
$$
where $A_{\alpha,\beta}$ is defined in \eqref{Aab}.  As in \eqref{equation11}
we get
\begin{equation*}
(\Omega\times\Omega)\cap E^i_{\alpha,\beta}=
((\Omega\cap A^i_{\alpha,\beta})\times (\Omega\setminus A^i_{\alpha,\beta}) )
\cup 
((\Omega\setminus A^i_{\alpha,\beta})\times (\Omega\cap A^i_{\alpha,\beta}) ).
\end{equation*}
Repeating the steps in the proof in the case $d=1$ we obtain, as in \eqref{stima11},
  \begin{equation*}
\int_{D_{\e }}1_{E^i_{\alpha,\beta}}(x,y)d\alpha d\beta\ge \Big({1\over 2}-\eta\Big)|D_{\e }|
\quad\hbox{for all $i\in\{1,\ldots, d\}$,} 
\end{equation*}
and hence that
$\mu((\Omega\times\Omega)\setminus\Delta^i)\le 2M$,
where $\Delta^i=\{(x,y)\in \mathbb R^d\times \mathbb R^d: x_i=y_i\}$.
Since $\Delta=\bigcap_{i=1}^d\Delta^i$, and $\mu((\Omega\times\Omega)\cap\Delta)=0$, we deduce that
\begin{eqnarray*}
\mu(\Omega\times\Omega)\le 2dM,
\end{eqnarray*}
which concludes the proof of (b).

\smallskip
In order to prove (a), we first note that for all disjoint open subsets $A$ and $B$ of $\Omega$, thanks to \eqref{repr-F} and the symmetry of $\mu$ we have
\begin{eqnarray*}
&&2\mu(A\times (\Omega\setminus A))+\nu(A)=\lim_{k\to+\infty}\Big(F(u_k)-\int_\Omega|\nabla u_k(x)|^2 \,dx\Big),
\\
&&2\mu(B\times (\Omega\setminus B))+\nu(B)=\lim_{k\to+\infty}\Big(F(v_k)-\int_\Omega|\nabla v_k(x)|^2 \,dx\Big),
\\
&&2\mu((A\cup B)\times (\Omega\setminus (A\cup B)))+\nu(A\cup B)\\
&&\hskip4cm=\lim_{k\to+\infty}\Big(F(u_k+v_k)-\int_\Omega|\nabla u_k(x)+\nabla v_k(x)|^2 \,dx\Big),
\end{eqnarray*}
where $u_k, v_k$ are sequences in $C^\infty_c(\Omega)$ with $0\le u_k\le 1_A$ and $0\le v_k\le 1_B$, such that $u_k(x)\to 1_A(x)$ and $v_k(x)\to 1_B(x)$ for all $x\in \Omega$. Summing up the first two equations above and subtracting the third one we have
\begin{eqnarray*}
\mu(A\times B)&=& {1\over 2} \Big(\mu(A\times (\Omega\setminus A))+\mu(B\times (\Omega\setminus B))-\mu((A\cup B)\times (\Omega\setminus (A\cup B)))\Big) \\
&=& \Phi_F(A,B),
\end{eqnarray*}
where
$$
\Phi_F(A,B):={1\over 4}\lim_{k\to+\infty}\big(F(u_k)+F(v_k)-F(u_k+v_k)\big).
$$
This shows that if $\mu_1$ and $\mu_2$ are symmetric bounded Borel measures and  satisfy \eqref{repr-F} 
then 
\begin{equation}\label{eqmu}
\mu_1(A\times B)=\mu_2(A\times B)
\end{equation}
for all disjoint open subsets $A$ and $B$ of $\Omega$. This property can be extended first to disjoint compact subsets of $\Omega$ and then to disjoint Borel subsets of $\Omega$. 

To extend this equality to arbitrary Borel subsets of $\Omega$ we fix $\eta>0$ and write
$$
A=\bigcup_{i=1}^{m_\eta} A_i,\qquad B=\bigcup_{j=1}^{n_\eta} B_j,
$$
where $A_i$ and $B_j$ are Borel partitions of $A$ and $B$, respectively, with diam$(A_i)<{\eta\over 2}$ and diam$(B_j)<{\eta\over 2}$ for all $i,j$. Setting $\mathcal D_\eta=\{(i,j): A_i\cap B_j\neq\emptyset\}$ and observing that $A\times B=\bigcup_{(i,j)}( A_i\times B_j)$ and that $\bigcup_{(i,j)\in \mathcal D_\eta}(A_i\times B_j)\subset\Delta_\eta$, by \eqref{eqmu} we obtain that 
\begin{eqnarray*}
|\mu_1(A\times B)-\mu_2(A\times B)|&=& \Big|\mu_1\Big(\bigcup_{(i,j)\in \mathcal D_\eta}(A_i\times B_j)\Big)-
 \mu_2\Big(\bigcup_{(i,j)\in \mathcal D_\eta}(A_i\times B_j)\Big)\Big|\\
 &\le& \mu_1((\Omega\times\Omega)\cap \Delta_\eta)+ \mu_2((\Omega\times\Omega)\cap \Delta_\eta).
\end{eqnarray*}
Therefore, if $\mu_1$ and $\mu_2$ also satisfy \eqref{stimadiag}, then, by letting $\eta\to 0$ we obtain that $\mu_1(A\times B)=\mu_2(A\times B)$ for all pairs of Borel sets, and hence that $\mu_1=\mu_2$. Finally, by \eqref{repr-F} we deduce that for every $u\in H^1_0(\Omega)$ the integral $\int_\Omega|u(x)|^2d\nu(x)$ is uniquely determined by $F$, which gives the uniqueness of such a $\nu$, and concludes the proof of (a).
\end{proof}

In the following theorem we use the classical notion of capacity, and $\widetilde u(x)$ denotes the precise representative of a function $u\in H^1_0(\Omega)$, which is defined up to sets of zero capacity (see \cite[Sections 4.7 and 4.8]{EG}). A similar result can be proved using the intrinsic capacity of the Dirichlet form $F$ and the corresponding precise representatives (see \cite[Theorem 4.5.2]{Fu}).

\begin{theorem}\label{capacity}
 Let $F\colon H^1_0(\Omega)\to [0,+\infty)$ and let  $\mu$ and $\nu$ be two bounded positive Radon measures on $\Omega\times\Omega$ and  $\Omega$, respectively, that satisfy {\rm(a)}, {\rm(b)}, and {\rm(c)} of Theorem~{\rm\ref{Integral rep}}. Suppose also that $F$ be  lower semicontinuous in the weak topology of $H^1_0(\Omega)$ and that there exists $M>0$ such that \eqref{stima6.1} holds for all $u\in C^\infty_c(\Omega)$. Then

{\rm(a)} if $B\subset\Omega$ is a Borel set with zero capacity, then $\mu(B\times\Omega)=\nu(B)=0$;

{\rm(b)} for every $u\in H^1_0(\Omega)\cap L^\infty(\Omega)$
\begin{equation}\label{Ftildeu}
F(u)=\int_{\Omega\times\Omega}|\widetilde u(x)-\widetilde u(y)|^2d\mu(x,y)+\int_\Omega|\widetilde u(x)|^2 d\nu(x)+\int_\Omega|\nabla u(x)|^2 \,dx;
\end{equation}

{\rm(c)} if, in addition, $F(\Psi_m(u))\le F(u)$ for all $u\in H^1_0(\Omega)$ and $m\in\mathbb N$, where $\Psi_m(t)= (m\wedge t)\vee (-m)$, then \eqref{Ftildeu} holds for every $u\in H^1_0(\Omega)$.
\end{theorem}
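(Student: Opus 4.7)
The plan is to prove (a) by a weak-lower-semicontinuity argument that "excises" the zero-capacity set, and to derive (b) and (c) by approximation. For (a), by inner regularity of the finite measures $\mu$ and $\nu$ I reduce to a compact $K\subset\Omega$ with zero capacity. Choose $\phi\in C^\infty_c(\Omega)$ with $0\le\phi\le 1$ and $\phi\equiv 1$ on a neighbourhood of $K$, and choose $\psi_n\in C^\infty_c(\Omega)$ with $0\le\psi_n\le 1$, $\psi_n\equiv 1$ near $K$, $\operatorname{supp}\psi_n\subset B_{r_n}(K)$ with $r_n\to 0$, and $\|\psi_n\|_{H^1_0(\Omega)}\to 0$ (which exists because $K$ has zero capacity). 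The excising sequence $\phi_n:=\phi(1-\psi_n)\in C^\infty_c(\Omega)$ vanishes near $K$, converges strongly in $H^1_0(\Omega)$ to $\phi$, and tends pointwise to $w:=\phi\,1_{\Omega\setminus K}$.

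Expanding $F(\phi_n)$ via the representation on $C^\infty_c$ and passing to the limit by dominated convergence (the measures are finite and the integrands are bounded by $4\|\phi\|_\infty^2$), the weak lower semicontinuity bound $F(\phi)\le\liminf F(\phi_n)$ simplifies, after cancelling the common $\int|\nabla\phi|^2$ and $\int\phi^2\,d\nu$ terms and using the pointwise identity $|w(x)-w(y)|^2-|\phi(x)-\phi(y)|^2=2\phi(y)-1$ on $K\times(\Omega\setminus K)$ (and the symmetric statement on $(\Omega\setminus K)\times K$ through the symmetry of $\mu$), to
\[
\nu(K)\ \le\ 2\int_{K\times(\Omega\setminus K)}\bigl(2\phi(y)-1\bigr)\,d\mu(x,y).
\]
Now take $\phi=\phi^{(m)}\in C^\infty_c(\Omega)$ with $\phi^{(m)}\equiv 1$ on $K$ and $\operatorname{supp}\phi^{(m)}\subset B_{1/m}(K)$: for every $y\in\Omega\setminus K$ one has $\phi^{(m)}(y)=0$ eventually, so dominated convergence gives $\nu(K)\le-2\mu(K\times(\Omega\setminus K))$, and non-negativity of $\mu$ and $\nu$ forces $\nu(K)=0$ and $\mu(K\times(\Omega\setminus K))=0$. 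Finally $\mu(K\times K)=0$ is deduced by writing $(K\times K)\setminus\Delta=\bigcup_{\delta>0}\{(x,y)\in K\times K:|x-y|\ge\delta\}$; each level set $\{|x-y|\ge\delta\}$ is covered by finitely many products $(K\cap V_i)\times(K\cap V_j)$ with disjoint open balls $V_i,V_j$ (possible by compactness of $K$), and $\mu((K\cap V_i)\times(K\cap V_j))\le\mu((K\cap V_i)\times(\Omega\setminus(K\cap V_i)))=0$ by the preceding step applied to the zero-capacity compact $K\cap V_i$, together with $\mu((K\times K)\cap\Delta)=0$ from Theorem~\ref{Integral rep}(c).

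For (b), I approximate $u\in H^1_0(\Omega)\cap L^\infty(\Omega)$ by $u_n\in C^\infty_c(\Omega)$ with $u_n\to u$ in $H^1_0(\Omega)$ and $\|u_n\|_{L^\infty}\le\|u\|_{L^\infty}$; passing to a subsequence $u_n\to\widetilde u$ quasi-everywhere, and by (a) also $\mu$- and $\nu$-almost everywhere. Dominated convergence (uniform $L^\infty$ bound and finite measures) together with $\nabla u_n\to\nabla u$ in $L^2$ gives $F(u_n)\to\bar F(u)$, where $\bar F(u)$ denotes the right-hand side of \eqref{Ftildeu}, and weak lower semicontinuity yields $F(u)\le\bar F(u)$. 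The opposite inequality follows because $\bar F$ is itself weakly lower semicontinuous on $H^1_0(\Omega)\cap L^\infty(\Omega)$ by Fatou (using (a) to interpret integrals against the precise representative) and coincides with $F$ on $C^\infty_c(\Omega)$; the uniqueness of $\mu$ and $\nu$ from Theorem~\ref{stimeme}(a) together with the oscillation bound \eqref{stima6.1} then force $F$ and $\bar F$ to agree on the common Hilbert completion in the $\|\cdot\|_F$-norm, which contains $H^1_0(\Omega)\cap L^\infty(\Omega)$. Part (c) is a monotone-convergence finish: the truncations $u_m=\Psi_m(u)\in H^1_0(\Omega)\cap L^\infty(\Omega)$ converge to $u$ in $H^1_0(\Omega)$ and satisfy $F(u_m)\le F(u)$, hence $F(u_m)\to F(u)$ by weak lower semicontinuity, while the $1$-Lipschitz monotonicity $|\widetilde u_m|\nearrow|\widetilde u|$, $|\widetilde u_m(x)-\widetilde u_m(y)|\nearrow|\widetilde u(x)-\widetilde u(y)|$, and $|\nabla u_m|\nearrow|\nabla u|$ let each term in the representation for $u_m$ pass to the corresponding term for $u$ by monotone convergence.

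The main obstacle is (a): the naive application of \eqref{stima6.1} to a classical capacitary sequence $\psi_n$ gives only $F(\psi_n)\le M+o(1)$ and hence $\nu(K)\le M$, and no linear scaling of $\psi_n$ improves this since $F$ is $2$-homogeneous while the oscillation of any function equal to $1$ on $K$ is at least $1$. What saves the argument is the sign of the algebraic identity $|w(x)-w(y)|^2-|\phi(x)-\phi(y)|^2=2\phi(y)-1$ on $K\times(\Omega\setminus K)$: this right-hand side becomes negative once $\phi$ has been shrunk down to $1_K$, and combining the resulting negative contribution with weak lower semicontinuity and the positivity of both measures is precisely what turns the a priori bound into the simultaneous vanishing $\nu(K)=0=\mu(K\times(\Omega\setminus K))$.
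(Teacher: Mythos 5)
Your part (a) is correct, and its mechanism is a cleaner variant of the paper's: where the paper introduces an auxiliary open set $U$ with $\mu((U\setminus K)\times \Omega)\le\eta$ and $\nu(U\setminus K)\le \eta$, a comparison function $w\le \tfrac14$ off $U$, and a capacitary replacement sequence vanishing on $K$, you excise with $\phi_n=\phi(1-\psi_n)$, pass to the limit in $n$, and then shrink $\phi=\phi^{(m)}$ to $1_K$; the resulting inequality $\nu(K)\le -2\mu(K\times(\Omega\setminus K))$ kills both quantities at once, with no $\eta$-bookkeeping. Two small repairs: the sets $K\cap V_i$ in your covering step need not be compact (a compact set intersected with an open ball), so use balls with disjoint closures and apply the first step to $K\cap \overline{V_i}$, or use the paper's device of partitioning $K$ into finitely many compact pieces of diameter less than $\eta/2$; and the existence of $\psi_n$ supported in $B_{r_n}(K)$ with $\|\psi_n\|_{H^1_0(\Omega)}\to 0$ should be justified by fixing $r_n$ first and using that $K$ has zero capacity relative to $B_{r_n}(K)\cap\Omega$, together with the Poincar\'e inequality for the $L^2$ part. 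Part (c) is correct and coincides with the paper's argument.

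The genuine gap is the reverse inequality in part (b). Writing $\bar F(u)$ for the right-hand side of \eqref{Ftildeu}, your proof of $F(u)\le \bar F(u)$ (dominated convergence along a q.e.-convergent, uniformly bounded smooth sequence, then weak lower semicontinuity of $F$) is fine. But the sentence deriving $\bar F(u)\le F(u)$ --- that weak lower semicontinuity of $\bar F$, agreement with $F$ on $C^\infty_c(\Omega)$, uniqueness of $\mu,\nu$, and \eqref{stima6.1} ``force $F$ and $\bar F$ to agree on the common Hilbert completion'' --- is not a proof, and the principle it invokes is false: two weakly lower semicontinuous functionals can agree on $C^\infty_c(\Omega)$ and differ on $H^1_0(\Omega)\cap L^\infty(\Omega)$. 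Concretely, take $\mu=\nu=0$, so that $\bar F$ is the Dirichlet integral, and redefine $F(u_0):=0$ at a single $u_0\in (H^1_0(\Omega)\cap L^\infty(\Omega))\setminus C^\infty_c(\Omega)$: this $F$ is still sequentially weakly lower semicontinuous (a sequence equal to $u_0$ infinitely often can only converge weakly to $u_0$, where lower semicontinuity is trivial), it satisfies all the hypotheses available to you, yet \eqref{Ftildeu} fails at $u_0$. This is exactly the device used in the paper's own Remark after the theorem to show that (c) needs the truncation hypothesis. The moral is that lower semicontinuity only ever bounds $F$ from below along sequences; it can never supply the upper bound on $F$ that the reverse inequality requires.

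What actually closes (b) --- and what the paper's proof uses, implicitly assuming (as holds in the application, by Proposition \ref{prop-BD}) that $F$ is a quadratic form whose domain equipped with the norm \eqref{normF} is a Hilbert space --- is a recovery-sequence mechanism: the truncated smooth approximations $u_k$ are bounded in $\|\cdot\|_F$ by the oscillation estimate, hence converge weakly to $u$ in $D(F)$, and Mazur's theorem produces convex combinations $v_k\in C^\infty_c(\Omega)$, still uniformly bounded in $L^\infty(\Omega)$ and still converging in $H^1_0(\Omega)$, which converge \emph{strongly} in $\|\cdot\|_F$, so that $F(v_k)\to F(u)$. Running your dominated-convergence computation along $v_k$ then gives $\bar F(u)=\lim_k F(v_k)=F(u)$, equality in one stroke. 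This production of a sequence in $C^\infty_c(\Omega)$ along which $F$ converges to $F(u)$ is the ingredient missing from your argument, and no appeal to uniqueness of the measures or to semicontinuity can replace it.
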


\begin{proof} We observe that, by the strong continuity of $u\mapsto\int_\Omega|\nabla u(x)|^2\dx$,  \begin{equation}\label{sci}
u\mapsto \int_{\Omega\times\Omega}|u(x)-u(y)|^2d\mu(x,y)+\int_\Omega|u(x)|^2 d\nu(x)\ \hbox{ is lower semicontinuous} \end{equation}
 on $C^\infty_c(\Omega)$ with respect to the strong topology of $H^1_0(\Omega)$.

Let $K$ be a compact subset of $\Omega$ with zero capacity. We now prove that 
\begin{eqnarray}\label{K-cap}
\mu(K\times(\Omega\setminus K))=0\quad\hbox{ and }\quad
\nu(K)=0.
\end{eqnarray}

Given $\eta>0$ let $U$ be an open set such that $K\subset U\subset \Omega$ and 
\begin{eqnarray}\label{U-cap}
\mu((U\setminus K)\times \Omega)\le \eta\quad\hbox{ and }\quad
\nu(U\setminus K)\le \eta,
\end{eqnarray}
and let $w\in C^\infty_c(\Omega)$ be such that $0\le w\le 1$ on the whole $\Omega$, $w=1$ in a neighbourhood of $K$, and $w\le {1\over 4}$ on $\Omega\setminus U$. Since $K$ has zero capacity there exist a sequence $u_k\in C^\infty_c(\Omega)$ converging to $w$ strongly in $H^1_0(\Omega)$, such that $0\le u_k\le 1$ on the whole $\Omega$, $u_k=0$ on $K$ and $u_k=w$ on $\Omega\setminus U$. Then we have
\begin{eqnarray*}
&&\hskip-.5cm\int_{\Omega\times\Omega} |u_k(x)-u_k(y)|^2d\mu(x,y)+ \int_\Omega |u_k(x)|^2d\nu(x) \\
&\le &2\int_{K\times (\Omega\setminus U)}|w(y)|^2d\mu(x,y) + \int_{(\Omega\setminus U)\times(\Omega\setminus U)} |w(x)-w(y)|^2d\mu(x,y)\\
&&+\mu((U\setminus K)\times (U\setminus K))+ 2\mu((U\setminus K)\times (\Omega\setminus U))
+\int_{\Omega\setminus U} |w(x)|^2d\nu(x) +\nu(U\setminus K)\\
&\le &{1\over 8}\mu(K\times (\Omega\setminus U)) + \int_{(\Omega\setminus U)\times(\Omega\setminus U)} |w(x)-w(y)|^2d\mu(x,y)\\
&&+
\int_{\Omega\setminus U} |w(x)|^2d\nu(x) +3\eta,
\end{eqnarray*}
while
\begin{eqnarray*}
&&\hskip-.5cm\int_{\Omega\times\Omega} |w(x)-w(y)|^2d\mu(x,y)+ \int_\Omega |w(x)|^2d\nu(x) \\
&\ge & 2\int_{K\times (\Omega\setminus U)}|1-w(y)|^2d\mu(x,y) +\int_{(\Omega\setminus U)\times(\Omega\setminus U)} |w(x)-w(y)|^2d\mu(x,y)\\
&& +\nu(K)+\int_{\Omega\setminus U} |w(x)|^2d\nu(x).\end{eqnarray*}
Hence, noting that $1-w(y)\ge {3\over 4}$ if $y\in \Omega\setminus U$, from the convergence of $u_k$ to $u$ and \eqref{sci} we obtain
\begin{eqnarray*}
{9\over 8}\mu(K\times (\Omega\setminus U))+\nu(K)\le {1\over 8}\mu(K\times (\Omega\setminus U)) +3\eta.
\end{eqnarray*} 
Taking into account that  $\mu(K\times (U\setminus K))\le \eta$ thanks to \eqref{U-cap} and the symmetry of $\mu$ we obtain
\begin{eqnarray*}
\mu(K\times (\Omega\setminus K))+\nu(K)\le 4\eta,
\end{eqnarray*} 
and \eqref{K-cap} is proved by the arbitrariness of $\eta$.

We now claim that
\begin{eqnarray}\label{K-K}
\mu(K\times K)=0,
\end{eqnarray} 
Given $\eta>0$ we can find a finite number of compact sets $K_i$ such that 
$K= \bigcup_i K_i$ and diam$K_i \le {\eta\over 2}$. Since 
\begin{eqnarray*}
K\times K\subset \bigcup_{i,j} (K_i\times K_j) = \bigcup_{(i,j)\in\mathcal D} (K_i\times K_j)\cup  \bigcup_{(i,j)\not\in\mathcal D} (K_i\times K_j),
\end{eqnarray*} 
where $\mathcal D=\{(i,j): K_i\cap K_j\neq\emptyset\}$, we have
\begin{eqnarray*}
\mu(K\times K)\le \mu\Big(\bigcup_{(i,j)\in\mathcal D} (K_i\times K_j)\Big) +\sum_{(i,j)\not\in\mathcal D}\mu( K_i\times K_j).
\end{eqnarray*} 
Since $K_j\subset \Omega\setminus K_i$ if $(i,j)\not\in\mathcal D$, by \eqref{K-cap} applied to $K_i$ the terms in the last sum are all zero. On the other hand $\bigcup_{(i,j)\in\mathcal D}( K_i\times K_j)\subset \Delta_\eta$, where $\Delta_\eta$ is  defined in \eqref{diag-eta}, so that $\mu(K\times K)\le \mu((\Omega\times \Omega)\cap \Delta_\eta)$. Since $\mu((\Omega\times \Omega)\cap \Delta)=0$ and $\mu$ is finite we obtain \eqref{K-K} by letting $\eta\to 0$.

Finally,  \eqref{K-cap} and \eqref{K-K} give that $\mu(K\times \Omega)=
\nu(K)=0$ for any $K$ compact set with zero capacity. Claim (a) is then obtain by approximation of $B$ with compact  sets contained in~$B$.

\smallskip
In order to prove claim (b), by proceeding as in the proof of Proposition \ref{prop-BD}(e) for all $u\in H^1_0(\Omega)\cap L^\infty(\Omega)$ we have a sequence $u_k\in C^\infty_c(\Omega)$ converging strongly to $u$ in $H^1_0(\Omega)$,  such that $\|u_k\|_{ L^\infty(\Omega)}\le \|u\|_{ L^\infty(\Omega)}$ and $u_k$ converge weakly to $u$  with respect to the Hilbert structure induced by the norm defined in \eqref{normF}.
By Mazur's theorem we obtain a new sequence $v_k\in C^\infty_c(\Omega)$ converging strongly to $u$ in $H^1_0(\Omega)$,  such that $\|v_k\|_{ L^\infty(\Omega)}\le \|u\|_{ L^\infty(\Omega)}$ and $v_k$ converge strongly to $u$ both in $H^1_0(\Omega)$ and with respect to the Hilbert structure induced by the norm defined in \eqref{normF}. In particular $F(v_k)\to F(u)$ and, upon passing to a subsequence, $v_k\to \widetilde u$ quasi-everywhere (in the sense of capacity). Together with the uniform bound, this implies that
$$
\lim_{k\to+\infty}\int_{\Omega\times\Omega}|v_k(x)-v_k(y)|^2d\mu(x,y)= \int_{\Omega\times\Omega}|\widetilde u(x)-\widetilde u(y)|^2d\mu(x,y)
$$
$$
\lim_{k\to+\infty}\int_{\Omega}|v_k(x)|^2d\nu(x)= \int_{\Omega}|\widetilde u(x)|^2d\nu(x),
$$
and claim (b) follows.

In order to prove (c), let $G\colon H^1_0(\Omega)\to [0,+\infty]$ be defined by the right-hand side in \eqref{Ftildeu}, and let $u\in H^1_0(\Omega)$. Let $u_m=\Phi_m(u)$ and note that $\lim\limits_{m\to+\infty} G(u_m)= G(u)$ by the Monotone Convergence . Since $F(u_m)= G(u_m)$ to conclude the proof it is enough to note that $\lim\limits_{m\to+\infty}F(u_m)= F(u)$, which follows from the hypothesis on truncations and the lower semicontinuity of $F$. 
\end{proof}

\begin{remark}\rm
Note that equality \eqref{Ftildeu} may not hold  on the whole $H^1_0(\Omega)$ if the additional assumption in (c) is dropped. For instance,  if $G$ is defined by the right-hand side in \eqref{Ftildeu}, and $F$ is defined as equal to $G$ except on a single $u_0\in H^1_0(\Omega)\setminus L^\infty(\Omega)$, where we set $F(u_0)=0$, then $F$ is lower semicontinuous, $F$ and $G$ are equal on $H^1_0(\Omega)\cap L^\infty(\Omega)$, but equality does not hold in the whole $H^1_0(\Omega)$.
\end{remark}

\begin{proposition}\label{vanish} In addition to the hypotheses of Theorem {\rm\ref{Integral rep}}, suppose that $a_k$ satisfies the following condition: for every $\e>0$ there exists a compact set $K_\e\subset \Omega$ such that
\begin{equation}\label{comp-ak}
\int_{(\Omega\times\Omega)\setminus (K_\e\times K_\e)} a_k(x,y)\dxy\le \e \hbox{ for every }k\in\mathbb N.
\end{equation}
Then the measure $\nu$ in Theorem {\rm\ref{Integral rep}} is the null measure.
\end{proposition}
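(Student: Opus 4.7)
The plan is to test the $\Gamma$-limit $F$ against cut-off functions equal to $1$ on compact subsets of $\Omega$ that, by \eqref{comp-ak}, absorb essentially all the mass of the weights $a_k$. On such a test function the non-local integrand $|\varphi(x)-\varphi(y)|^2 a_k(x,y)$ vanishes identically on the part of $\Omega\times\Omega$ where $a_k$ is concentrated, so $F_k(\varphi)$ reduces, up to an error $O(\varepsilon)$, to the local gradient term; passing to the $\Gamma$-limit through the decomposition \eqref{repr-F} will then force $\nu$ to be small.

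Fix a compact set $C\subset\Omega$ and $\varepsilon>0$; the aim is to prove $\nu(C)\le 4\varepsilon$, so that $\nu=0$ on $\Omega$ by the arbitrariness of $C$ and $\varepsilon$ together with inner regularity of the Radon measure $\nu$. First, enlarge the compact set $K_\varepsilon$ provided by \eqref{comp-ak} to a compact set $K\subset\Omega$ containing both $C$ and $K_\varepsilon$. Since $K_\varepsilon\times K_\varepsilon\subset K\times K$, enlarging the compact set only shrinks the complementary region, and \eqref{comp-ak} continues to hold with $K$ in place of $K_\varepsilon$. Now pick $\varphi\in C^\infty_c(\Omega)$ with $0\le\varphi\le 1$ and $\varphi\equiv 1$ on $K$.

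For this choice, $\varphi(x)-\varphi(y)=0$ whenever $(x,y)\in K\times K$, while in general $|\varphi(x)-\varphi(y)|^2\le 4$. Consequently, for every $k$,
$$
\int_{\Omega\times\Omega}|\varphi(x)-\varphi(y)|^2 a_k(x,y)\dxy\le 4\int_{(\Omega\times\Omega)\setminus(K\times K)}a_k(x,y)\dxy\le 4\varepsilon,
$$
and therefore $F_k(\varphi)\le 4\varepsilon+\int_\Omega|\nabla\varphi(x)|^2\dx$. Applying the $\Gamma$-liminf inequality to the constant sequence $u_k\equiv\varphi$ (which trivially converges weakly to $\varphi$ in $H^1_0(\Omega)$) we obtain
$$
F(\varphi)\le\liminf_{k\to+\infty}F_k(\varphi)\le 4\varepsilon+\int_\Omega|\nabla\varphi(x)|^2\dx.
$$
Subtracting the gradient term and using the representation \eqref{repr-F}, the non-negativity of the $\mu$-integral yields $\int_\Omega|\varphi(x)|^2 d\nu(x)\le 4\varepsilon$, and since $\varphi\equiv 1$ on $K\supset C$ we conclude $\nu(C)\le\nu(K)\le 4\varepsilon$.

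There is no substantial obstacle in this argument: the heart of it is simply that the compactness hypothesis \eqref{comp-ak} furnishes admissible cutoffs annihilating the non-local part of $F_k$ uniformly in $k$. The only bookkeeping point worth underlining is that \eqref{comp-ak} is monotone under enlargement of the compact set, which is what allows the single compact $K_\varepsilon$ given by the hypothesis to be replaced by a compact set covering an arbitrary prescribed $C\subset\Omega$.
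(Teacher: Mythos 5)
Your proof is correct and follows essentially the same route as the paper's: test the $\Gamma$-liminf inequality with a smooth cutoff equal to $1$ on a compact set containing both $K_\e$ and the target compact, use \eqref{comp-ak} to make the non-local term uniformly small in $k$, and then read off from the representation \eqref{repr-F} that the $\nu$-integral is small. The only cosmetic differences are that the paper splits the argument into two steps (first bounding $F^\ell$ on cutoffs adapted to $K_\e$, then covering an arbitrary compact), and your constant $4\e$ could be sharpened to $\e$ since $0\le\varphi\le1$ gives $|\varphi(x)-\varphi(y)|\le 1$.
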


\begin{proof}
Let $ u\in C^\infty_c(\Omega)$ such that $0\le u\le 1$ and $u=1$ on $K_\e$. 
Then, by using $u$ as test function in the $\Gamma$-limit we have
\begin{eqnarray*}
F(u)&\le &\liminf_{k\to+\infty} \int_{(\Omega\times\Omega)\setminus (K_\e\times K_\e)}|u(x)-u(y)|^2\dxy
+\int_\Omega|\nabla u(x)|^2\dx
\\
&\le& \e+\int_\Omega|\nabla u(x)|^2\dx.
\end{eqnarray*}
Since $F(u)=F^n(u)+F^\ell(u)+ \int_\Omega|\nabla u(x)|^2\dx$ we conclude that
$F^\ell(u)\le \e$ for all such $u$.

We now fix a compact $K$ in $\Omega$ and for each $\e>0$ take $u_\e\in C^\infty_c(\Omega)$ with $0\le u_\e\le 1$ and $u_\e=1$ on $K\cup K_\e$. By the estimate above we have
$$
\nu(K)\le \int_{\Omega}|u_\e(x)|^2d\nu(x)= F^\ell(u_\e)\le \e .
$$
By the arbitrariness of $\e$ we obtain that $\nu(K)=0$ for all $K$ compact of $\Omega$, which proves the claim.
\end{proof}

The following corollary improves the conclusions of Theorem \ref{Integral rep} in light of Theorems \ref{stimeme} and \ref{capacity} and of Proposition \ref{vanish}.

\begin{corollary}\label{cor}
Let $F_k$ be given by \eqref{efffek}, with $a_k$ satisfying \eqref{estimaak}. Suppose that $F_k$ $\Gamma$-converges to $F$ with respect to the weak topology in $H^1_0(\Omega)$. 
Then there exist two positive finite Radon measures $\mu$ and $\nu$  on $\Omega\times\Omega$ and  $\Omega$, respectively, such that

 {\rm(a)} $\mu$  is symmetric and $\mu((\Omega\times\Omega)\setminus \Delta)=0$;

{\rm(b)}  $\mu(B\times\Omega)=\nu(B)=0$ for all Borel sets $B\subset\Omega$with zero capacity;

{\rm(c)} for every $u\in H^1_0(\Omega)$
\begin{equation}\label{Gtildeu}
F(u)=\int_{\Omega\times\Omega}|\widetilde u(x)-\widetilde u(y)|^2d\mu(x,y)+\int_\Omega|\widetilde u(x)|^2 d\nu(x)+\int_\Omega|\nabla u(x)|^2 \,dx;
\end{equation}
If, in addition, $a_k$ satisfies property \eqref{comp-ak}, then $\nu=0$.
\end{corollary}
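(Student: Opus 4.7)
The plan is to assemble the earlier results in the paper, verifying at each step that their hypotheses are met. First I would apply Theorem \ref{Integral rep} to the $\Gamma$-limit $F$, obtaining positive Radon measures $\mu$ on $\Omega\times\Omega$ and $\nu$ on $\Omega$ such that the representation \eqref{repr-F} holds on $C^\infty_c(\Omega)$, together with the symmetry of $\mu$ and the property $\mu((\Omega\times\Omega)\cap\Delta)=0$. This already gives part of (a).

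Next, to promote the finiteness statements I would verify the hypothesis \eqref{stima6.1} of Theorem \ref{stimeme}. For every $u\in C^\infty_c(\Omega)$, using $u_k\equiv u$ as a recovery sequence in the definition of the $\Gamma$-limit and invoking \eqref{osc} gives
\begin{equation*}
F(u)\le\liminf_{k\to+\infty}F_k(u)\le M\,(\mathrm{osc}_\Omega u)^2+\int_\Omega|\nabla u(x)|^2\,dx,
\end{equation*}
which is exactly \eqref{stima6.1}. Theorem \ref{stimeme} then yields both the uniqueness of $\mu$ and $\nu$ and their finiteness; in particular all quantities appearing in (a) and the first claim of (b) are meaningful.

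To promote the representation from $C^\infty_c(\Omega)$ to the whole of $H^1_0(\Omega)$ I would apply Theorem \ref{capacity}. Its hypotheses are: the bound \eqref{stima6.1}, already checked; the weak lower semicontinuity of $F$ in $H^1_0(\Omega)$, which holds because $F$ is a $\Gamma$-limit in that topology; and, for part (c) of Theorem \ref{capacity}, the truncation inequality $F(\Psi_m(u))\le F(u)$, which is exactly Proposition \ref{prop-BD}(d). Theorem \ref{capacity}(a) then produces claim (b) of the corollary (vanishing of $\mu(\cdot\times\Omega)$ and $\nu$ on Borel sets of zero capacity), while Theorem \ref{capacity}(c) yields the integral representation \eqref{Gtildeu} on all of $H^1_0(\Omega)$ in terms of the precise representative $\widetilde u$, which is claim (c).

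Finally, under the extra hypothesis \eqref{comp-ak}, Proposition \ref{vanish} applies verbatim to give $\nu=0$. The argument is essentially a bookkeeping exercise that chains the four earlier results; the only subtlety is to make sure that the hypothesis \eqref{stima6.1} is propagated from the prefactor $M$ in \eqref{estimaak} via \eqref{osc} and liminf, and that one invokes Theorem \ref{capacity}(c) rather than (b) so as to obtain the representation on all of $H^1_0(\Omega)$ and not merely on $H^1_0(\Omega)\cap L^\infty(\Omega)$.
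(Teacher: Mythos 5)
Your proposal is correct and follows essentially the same route as the paper: the paper's proof likewise cites Theorems \ref{Integral rep}, \ref{stimeme}, and \ref{capacity}, notes that the truncation hypothesis of Theorem \ref{capacity}(c) is supplied by Proposition \ref{prop-BD}(d), and derives the final statement from Proposition \ref{vanish}. Your additional verifications (the bound \eqref{stima6.1} via the liminf inequality applied to the constant sequence together with \eqref{osc}, and the weak lower semicontinuity of the $\Gamma$-limit) are exactly the hypotheses the paper leaves implicit, so nothing is missing.
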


\begin{proof} This corollary is an immediate consequence of Theorems \ref{Integral rep}, \ref{stimeme}, and \ref{capacity}, noting that the hypothesis of Theorem \ref{capacity}(c) is satisfied thanks Proposition \ref{prop-BD}(d). The last statement follows from Proposition \ref{vanish}.
\end{proof} 

\begin{remark}[extension to general double integrals]\rm The conclusions of Corollary \ref{cor} remain valid if we consider the functionals defined by 
\begin{equation}
F_k(u)=\int_{\Omega\times\Omega}|\widetilde u(x)-\widetilde u(y)|^2d\mu_k(x,y)+\int_\Omega|\nabla u(x)|^2 \,dx
\end{equation}
for every $u\in H^1_0(\Omega)$, with condition \eqref{estimaak} substituted by
\begin{equation}\label{estimamuk} 
\mu_k(\Omega\times\Omega)\le M \hbox{ for every }k\in\mathbb N,
\end{equation}
and \eqref{comp-ak}  substituted by
\begin{equation}\label{comp-muk}
\mu_k((\Omega\times\Omega)\setminus (K_\e\times K_\e))\le \e \hbox{ for every }k\in\mathbb N.
\end{equation}
\end{remark}

\section{The counterexample}\label{Sec3}
We fix $p\in(1,+\infty)$. For simplicity of notation we suppose $0\in\Omega$, and let  $B_r$ be the ball of centre $0$ and radius $r$. We fix a sequence of positive numbers $\e_k$ converging to $0$, and define the functionals $F_k\colon  W^{1,p}_0(\Omega)\to \mathbb R$ by setting
\begin{equation}\label{effe-k}
F_k(u)={1\over  |B_{\e_k}|}\int_\Omega \int_{B_{\e_k} } |u(x)-u(y)|^p\dy\dx+ \int_\Omega |\nabla u(x)|^p\dx
\end{equation}
for every $u\in W^{1,p}_0(\Omega)$.

\begin{definition} For all  $u\in L^p(\Omega)$ we define $m_p(u)$ as the unique minimum point of
$$
t\mapsto \int_\Omega |u(x)-t|^p\,dx.
$$

\end{definition}

\begin{lemma}\label{lemma} The map $m_p\colon  L^p(\Omega)\to \mathbb R$ is continuous.\end{lemma}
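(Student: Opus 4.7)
The plan is the standard subsequence argument for continuous selections of unique minimizers: establish a uniform a priori bound on $m_p(u_n)$ when $u_n \to u$ in $L^p(\Omega)$, extract a convergent subsequence, and identify its limit as $m_p(u)$ by passing to the limit in the minimality inequality.

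First I would prove a uniform coercivity/boundedness estimate. Setting $\phi_v(t) := \int_\Omega |v(x) - t|^p \, dx$, the reverse triangle inequality in $L^p$ gives
$$
\phi_v(t)^{1/p} = \|v - t\|_{L^p(\Omega)} \ge |t|\, |\Omega|^{1/p} - \|v\|_{L^p(\Omega)}.
$$
Since $m_p(v)$ minimizes $\phi_v$, we have $\phi_v(m_p(v)) \le \phi_v(0) = \|v\|_{L^p(\Omega)}^p$, whence
$$
|m_p(v)| \le 2\, |\Omega|^{-1/p}\, \|v\|_{L^p(\Omega)}.
$$
Applied to a sequence $u_n \to u$ in $L^p(\Omega)$, this gives that $(m_p(u_n))$ is bounded in $\mathbb{R}$.

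Next, take any subsequence of $(m_p(u_n))$ converging to some $t^\ast \in \mathbb R$, which exists by the bound above. The functional $(v,t) \mapsto \phi_v(t)$ is jointly continuous on $L^p(\Omega) \times \mathbb R$, since
$$
\|v_n - t_n - (v - t)\|_{L^p(\Omega)} \le \|v_n - v\|_{L^p(\Omega)} + |t_n - t|\, |\Omega|^{1/p} \to 0
$$
and $w \mapsto \|w\|_{L^p(\Omega)}^p$ is continuous. Passing to the limit along the subsequence in the minimality inequality $\phi_{u_n}(m_p(u_n)) \le \phi_{u_n}(s)$, valid for every $s \in \mathbb R$, yields $\phi_u(t^\ast) \le \phi_u(s)$ for every $s$, so $t^\ast$ is a minimizer of $\phi_u$. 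Since $p > 1$, $\phi_u$ is strictly convex and its minimizer is unique, so $t^\ast = m_p(u)$.

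Because every subsequence of the bounded sequence $(m_p(u_n))$ has a further subsequence converging to the same limit $m_p(u)$, the whole sequence converges to $m_p(u)$, giving continuity. I do not expect a substantial obstacle: the only point to handle with care is the uniform coercivity bound, which is what lets one extract the subsequential limit in the first place.
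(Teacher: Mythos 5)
Your proof is correct and follows essentially the same route as the paper: a uniform bound on $m_p(u_n)$ via minimality against $t=0$, extraction of a convergent subsequence, passage to the limit in the minimality inequality, and identification of the limit as $m_p(u)$ by uniqueness of the minimizer. You are in fact slightly more careful than the paper on two points it leaves implicit — the factor $|\Omega|^{-1/p}$ in the a priori bound and the final subsequence-of-subsequence step — but the argument is the same.
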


\begin{proof}
Let $u_k\to u$ in $L^p(\Omega)$. Then the sequence $m_p(u_k)$ is bounded since
$$
|m_p(u_k)|\le \|u_k-m_p(u_k)\|_{L^p(\Omega)}+\|u_k\|_{L^p(\Omega)}\le 2\|u_k\|_{L^p(\Omega)}.
$$
We can suppose, upon subsequences, that $m_p(u_k)\to t_0$. With fixed $t\in\mathbb R$, we can pass to the limit in the inequalities
$$
\int_\Omega |u_k(x)-m_p(u_k)|^p\dx\le \int_\Omega |u_k(x)-t|^p\dx,
$$
and obtain that
$$
\int_\Omega |u(x)-t_0|^p\dx\le \int_\Omega |u(x)-t|^p\dx,
$$
which concludes the proof.
\end{proof}

\begin{theorem}\label{Gamma counterexample}
If $p\in (1,d]$ then the $\Gamma$-limit of $F_k$ with respect to the weak $W^{1,p}_0$-convergence is the functional $F$ defined by
\begin{equation}\label{effe}
F(u)=\int_\Omega| u(x)- m_p(u)|^p\dx+\int_\Omega|\nabla u(x)|^p \,dx
\end{equation}
for every $u\in W^{1,p}_0(\Omega)$.
\end{theorem}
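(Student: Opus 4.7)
The plan is to prove the two standard $\Gamma$-convergence inequalities separately. For the $\Gamma$-liminf I would apply Jensen's inequality to lower-bound the non-local term by an $L^p$-distance from a constant, and then invoke the minimizing characterization of $m_p$. Given $u_k\wto u$ in $W^{1,p}_0(\Omega)$, the convexity of $t\mapsto|u_k(x)-t|^p$ yields
\begin{equation*}
\Big|u_k(x)-\tfrac{1}{|B_{\e_k}|}\int_{B_{\e_k}}u_k(y)\dy\Big|^p\le \tfrac{1}{|B_{\e_k}|}\int_{B_{\e_k}}|u_k(x)-u_k(y)|^p\dy.
\end{equation*}
Integrating over $\Omega$ and replacing the inner average by $m_p(u_k)$ (which only decreases the left-hand side) shows that the non-local part of $F_k(u_k)$ dominates $\int_\Omega |u_k(x)-m_p(u_k)|^p\dx$. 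Rellich compactness gives $u_k\to u$ in $L^p(\Omega)$, Lemma~\ref{lemma} gives $m_p(u_k)\to m_p(u)$, and the weak lower-semicontinuity of $\int_\Omega|\nabla \cdot|^p\dx$ handles the gradient term; taking the liminf then produces $F(u)$.

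For the $\Gamma$-limsup I would first treat $u\in C^\infty_c(\Omega)$. Since $p\le d$, the singleton $\{0\}$ has zero $p$-capacity, so there exist $\phi_n\in C^\infty_c(\Omega)$ with $0\le\phi_n\le 1$, $\phi_n\equiv 1$ on a ball $B_{r_n}$ with $r_n>0$, and $\phi_n\to 0$ strongly in $W^{1,p}_0(\Omega)$. Define
\begin{equation*}
v_n:= u+(m_p(u)-u)\phi_n \in C^\infty_c(\Omega),
\end{equation*}
so that $v_n\equiv m_p(u)$ on $B_{r_n}$ and $v_n=u$ outside $\mathrm{supp}\,\phi_n$. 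Because $u\in L^\infty(\Omega)$, the identity $\nabla v_n-\nabla u=-\phi_n\nabla u+(m_p(u)-u)\nabla\phi_n$ combined with dominated convergence (along a subsequence where $\phi_n\to 0$ a.e.) shows $v_n\to u$ strongly in $W^{1,p}_0(\Omega)$. For every $k$ with $\e_k<r_n$ the inner ball lies inside $\{v_n\equiv m_p(u)\}$, so
\begin{equation*}
F_k(v_n)=\int_\Omega|v_n(x)-m_p(u)|^p\dx+\int_\Omega|\nabla v_n(x)|^p\dx.
\end{equation*}
Sending first $k\to\infty$ and then $n\to\infty$ gives $F(u)$, and a diagonal extraction produces a sequence $w_k:=v_{n(k)}$ with $w_k\to u$ strongly (hence weakly) in $W^{1,p}_0(\Omega)$ and $F_k(w_k)\to F(u)$.

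For general $u\in W^{1,p}_0(\Omega)$ I would approximate $u$ strongly in $W^{1,p}_0$ by functions in $C^\infty_c(\Omega)$: the functional $F$ is continuous along such sequences (again via Lemma~\ref{lemma}), and the $\Gamma$-limsup is weakly, hence strongly, lower semicontinuous, so a standard density argument extends the limsup estimate to the whole space. The main obstacle is the construction of the cutoffs $\phi_n$, which requires precisely $p\le d$; without this hypothesis the trace of $u$ at $0$ would survive under $\Gamma$-convergence and the identification of the limit with $F$ would break down—in line with the counterexample to integral representability that the paper is building.
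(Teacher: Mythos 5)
Your proof is correct and follows essentially the same path as the paper: the liminf inequality is obtained exactly as there (Jensen's inequality, minimality of $m_p(u_k)$, continuity of $m_p$ from Lemma \ref{lemma}, and weak lower semicontinuity of the gradient term), and the limsup inequality rests on the same key fact that a point has zero $p$-capacity when $p\le d$. The only difference is one of packaging: the paper builds recovery sequences $u+m_p(u)v_k$, with $v_k$ the $p$-capacitary potential of $B_{\e_k}$, for $u$ vanishing near the origin and then invokes density of such functions, whereas you interpolate $u(1-\phi_n)+m_p(u)\phi_n$ with cutoffs at a fixed scale $r_n$ and diagonalize over $n$ and $k$ --- the same capacity argument, organized slightly differently.
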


\begin{proof} 
Let $u_k\wto u$ weakly  in $W^{1,p}_0(\Omega)$. Then also $u_k\to u$ strongly in $L^p(\Omega)$. Hence, using Jensen's inequality, the minimality of $m_p(u_k)$, and applying Lemma \ref{lemma}, we get
\begin{eqnarray*}
&&\hskip-2cm\liminf_{k\to+\infty} {1\over  |B_{\e_k}|}\int_\Omega \int_{B_{\e_k} } |u_k(x)-u_k(y)|^p\dy\dx
\\
&\ge&\liminf_{k\to+\infty} \int_{\Omega}\bigg|u_k(x)-{1\over |B_{\e_k}|}\int_{B_{\e_k}}u_k(y)\dy\bigg|^p dx 
\\
&\ge&\liminf_{k\to+\infty} \int_{\Omega}|u_k(x)-m_p(u_k)|^p dx\ge \int_{\Omega}|u(x)-m_p(u)|^p dx.
\end{eqnarray*}
Since the term $\int_\Omega|\nabla u(x)|^p\dx$ is lower semicontinuous, this proves the liminf inequality.

To prove the upper bound, we first construct a recovery sequence if $u=0$ in a neighbourhood of $0$.
In this case, let $v_k$ be the $p$-capacitary potential of $B_{\e_k}$ with respect to $\Omega$; that is, the minimizer of
$$
\min\Big\{\int_{\Omega}|\nabla v(x)|^p\,dx: v\in W^{1,p}_0(\Omega), v=1 \hbox { on }B_{\e_k}\Big\}.
$$
Since $p\le d$ it is known that $v_k\to 0$ in $W^{1,p}_0(\Omega)$  (see e.g.~\cite[Section 4.7]{EG}). We then set
$u_k=u+m_p(u)v_k$, and obtain
$$
\limsup_{k\to+\infty} F_k(u_k)= \limsup_{k\to+\infty} \Bigl(\int_\Omega |u_k(x)-m_p(u)|^p\dx+\int_\Omega |\nabla u_k(x)|^p\dx\Bigr) = F(u)
$$
Since $F$ is continuous in $W^{1,p}_0(\Omega)$ and the set of function $C^\infty_c(\Omega)$ 
which are $0$ in a neighbourhood of $0$ 
is dense in $W^{1,p}_0(\Omega)$,
the claim follows.
\end{proof}

\begin{remark}[$\Gamma$-limit in $W^{1,p}(\Omega)$]\rm
If $\Omega$ is a bounded open set with Lipschitz boundary, then the functionals defined by \eqref{effe-k} for $u\in W^{1,p}(\Omega)$ $\Gamma$-converge with respect to the $L^p(\Omega)$ convergence to the functional 
defined by \eqref{effe} for $u\in W^{1,p}(\Omega)$. Indeed, in the proof we only use the boundary condition to deduce the equi-coerciveness of the functionals, a property that is also assured by the regularity of $\partial\Omega$.
\end{remark}

We now want to show that $F$ cannot be represented in the form
$$
F(u)=\int_{\Omega\times\Omega}f(u(x),u(y))d\mu(x,y)+\int_\Omega g(u(x))d\nu(x)+\int_\Omega|\nabla u(x)|^p\dx
$$
for $u\in C^\infty_c(\Omega)$, where $f\colon \mathbb R^2\to\mathbb R$ and $g\colon \mathbb R\to \mathbb R$
are continuous functions, while $\mu$ and $\nu$ are two positive bounded Radon measures on $\Omega\times\Omega$ and  $\Omega$, respectively.
%
%
%
To that end we examine the two integrals with respect to $\mu$ and $\nu$ separately from the third one.

\begin{proposition}\label{prop1}  Let $f\colon \mathbb R^2\to\mathbb R$ and $g\colon \mathbb R\to \mathbb R$
be continuous functions, and let $\mu$ and $\nu$ be two positive bounded Radon measures on $\Omega\times\Omega$ and  $\Omega$, respectively. Suppose that
\begin{equation}\label{1}
\int_\Omega| u(x)- m_p(u)|^p\dx= \int_{\Omega\times\Omega}f(u(x),u(y))d\mu(x,y)+\int_\Omega g(u(x))d\nu(x)
\end{equation} holds for $u\in C^\infty_c(\Omega)$, then the same equality holds also for $u=1_A$, for all $A$ open of $\Omega$;
that is,
\begin{equation}\label{2}
\int_\Omega| 1_A(x)- m_p(1_A)|^p\dx= \int_{\Omega\times\Omega}f(1_A(x),1_A(y))d\mu(x,y)+\int_\Omega g(1_A(x))d\nu(x).
\end{equation}
\end{proposition}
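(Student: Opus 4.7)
The approach is a straightforward approximation argument: since $A$ is open, we can approximate $1_A$ pointwise by smooth compactly supported functions and pass to the limit on both sides of \eqref{1}.

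First, I would choose an increasing sequence of compact sets $K_j \subset A$ with $\bigcup_j K_j = A$, and for each $k$ pick a function $u_k \in C^\infty_c(\Omega)$ with $\mathrm{supp}(u_k) \subset A$, $0 \le u_k \le 1$, and $u_k = 1$ on $K_k$. Since $A$ is open, every $x \in A$ eventually lies in some $K_k$, so $u_k(x) \to 1$ for $x \in A$; and for $x \notin A$ we have $u_k(x) = 0 = 1_A(x)$. Hence $u_k(x) \to 1_A(x)$ pointwise on the whole $\Omega$.

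Next I would apply \eqref{1} to each $u_k$ and pass to the limit term by term. For the left-hand side, pointwise convergence together with the uniform bound $|u_k - m_p(u_k)|^p \le (1 + 2\|u_k\|_{L^p})^p$ (and, by Lemma \ref{lemma}, $m_p(u_k) \to m_p(1_A)$) lets me invoke Lebesgue's dominated convergence theorem to obtain
\[
\int_\Omega |u_k(x) - m_p(u_k)|^p\,dx \to \int_\Omega |1_A(x) - m_p(1_A)|^p\,dx.
\]
For the right-hand side, the continuous functions $f$ and $g$ are bounded on the compact sets $[0,1]^2$ and $[0,1]$ respectively by some constant $C$, so $|f(u_k(x), u_k(y))| \le C$ and $|g(u_k(x))| \le C$. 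Because $\mu$ and $\nu$ are finite measures and $f(u_k(x), u_k(y)) \to f(1_A(x), 1_A(y))$ pointwise on $\Omega \times \Omega$ and $g(u_k(x)) \to g(1_A(x))$ pointwise on $\Omega$, dominated convergence yields the convergence of both integrals to their desired limits, establishing \eqref{2}.

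There is no real obstacle here: the only delicate point is ensuring that the approximating sequence converges pointwise everywhere in $\Omega$ (not just almost everywhere with respect to some measure), which is needed because $\mu$ and $\nu$ may not be absolutely continuous with respect to Lebesgue measure. This is precisely why I use the exhaustion of the open set $A$ by compact subsets together with smooth cutoffs supported in $A$, which gives literal pointwise convergence at every point of $\Omega$.
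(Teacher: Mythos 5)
Your proof is correct and takes essentially the same route as the paper's: approximate $1_A$ pointwise everywhere (not just a.e.) by smooth functions with $0\le u_k\le 1_A$, use Lemma \ref{lemma} to pass to the limit on the left-hand side, and use dominated convergence (with $f,g$ bounded on $[0,1]^2$ and $[0,1]$, and $\mu,\nu$ finite) on the right-hand side. The only difference is that the paper additionally normalizes $f(0,0)=g(0)=0$ by testing \eqref{1} with $u=0$; this is not needed for the proposition itself, but it is used tacitly later in formula \eqref{3}.
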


\begin{proof} Preliminarily, note that, taking $u=0$ in \eqref{1}, we obtain
\begin{equation*}
0= f(0,0)\,\mu(\Omega\times\Omega)+ g(0)\,\nu(\Omega).
\end{equation*}
It is  then not restrictive to assume that 
\begin{equation}\label{zero}
f(0,0)=g(0)=0,
\end{equation}
up to substituting $f(s,t)$ with $f(s,t)-f(0,0)$ and $g(s)$ with $g(s)-g(0)$.

Let now $A$ be an open set relatively compact in $\Omega$, and let $u_k$ be a sequence in $C^\infty_c(\Omega)$ converging pointwise to $1_A$ and such that $0\le u_k(x)\le 1_A(x)$. By the convergence of $u_k$ to $1_A$ in $L^p(\Omega)$ and Lemma \ref{lemma} we have the convergence of the left-hand term in \eqref{1} to the corresponding term in \eqref{2}. As for the right-hand side of \eqref{1}, it suffices to apply the 
Dominated Convergence Theorem.
\end{proof}


%

\begin{remark}[computation of $m_p(u)$ for characteristic functions]
\rm  For any measurable set $A$ the constant  $m_p(1_A)$ is obtained by minimizing 
$$
 \int_\Omega| 1_A(x)- t|^p\dx= |A||1-t|^p+(|\Omega|-|A|) |t|^p.
$$
The minimal $t\in[0,1]$ is determined by 
$(|\Omega|-|A|) t^{p-1}=|A|(1-t)^{p-1}$;
that is, we have
$$
{1-t\over t}= \Bigl({|\Omega|-|A|\over|A|}\Bigr)^{1/(p-1)}
\quad\hbox{and  }\quad
m_p(1_A)= {|A|^{1/(p-1)}\over (|\Omega|-|A|)^{1/(p-1)}+|A|^{1/(p-1)}}.
$$
\end{remark}

\begin{remark}\rm
From the previous remark we have that \begin{equation}\label{33}
\int\limits_\Omega| 1_A(x)- m_p(1_A)|^p\dx= \Phi_p(|A|),\end{equation} where
\begin{equation}\label{fp}
\Phi_p(s):={ s(|\Omega|-s)^{p/(p-1)} +  (|\Omega|-s)s^{p/(p-1)}\over
((|\Omega|-s)^{1/(p-1)}+s^{1/(p-1)})^p}= { s(|\Omega|-s)\over
((|\Omega|-s)^{1/(p-1)}+s^{1/(p-1)})^{p-1}}.
\end{equation}
\end{remark}

The following proposition relates the function $\Phi_p$ defined in \eqref{fp} and the measure $\mu$.

\begin{proposition}\label{prop2} Under the assumptions of Proposition {\rm\ref{prop1}}  for all $A,B$ open sets in $\Omega$ with $A\cap B=\emptyset$ we have
\begin{equation}\label{3b}
\Phi_p(|A|)+\Phi_p(|B|)- \Phi_p(|A|+|B|)=C_f(\mu(A\times B) + \mu(B\times A)),
\end{equation}
where $C_f=f(1,0)+f(0,1)-f(1,1)$.
\end{proposition}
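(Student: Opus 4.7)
The plan is to specialize the identity \eqref{2} of Proposition \ref{prop1} to the three open sets $A$, $B$, and $A\cup B$, and then form the linear combination (equation for $A$) $+$ (equation for $B$) $-$ (equation for $A\cup B$). First, as in the proof of Proposition \ref{prop1}, I would reduce to the case $f(0,0)=g(0)=0$: taking $u=0$ in \eqref{1} gives $f(0,0)\mu(\Omega\times\Omega)+g(0)\nu(\Omega)=0$, so the substitution $f\mapsto f-f(0,0)$, $g\mapsto g-g(0)$ leaves both \eqref{1} and the target identity unchanged, and under this normalization the constant in the statement coincides with $f(1,0)+f(0,1)-f(1,1)$. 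Once this is done, for any open $U\subset\Omega$ the function $f(1_U(x),1_U(y))$ takes the values $f(1,1)$, $f(1,0)$, $f(0,1)$, $0$ on the four quadrants $U\times U$, $U\times U^c$, $U^c\times U$, $U^c\times U^c$ (with $U^c:=\Omega\setminus U$), while $g(1_U)$ equals $g(1)$ on $U$ and $0$ on $U^c$. Combining this with \eqref{33}, identity \eqref{2} reduces to
\[
\Phi_p(|U|)=f(1,1)\mu(U\times U)+f(1,0)\mu(U\times U^c)+f(0,1)\mu(U^c\times U)+g(1)\nu(U).
\]

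The main step is then the evaluation of $\Phi_p(|A|)+\Phi_p(|B|)-\Phi_p(|A|+|B|)$ through this reduced identity. Using the disjointness of $A$ and $B$ and the disjoint decompositions $A^c=B\sqcup(A\cup B)^c$ and $B^c=A\sqcup(A\cup B)^c$, a short bookkeeping yields
\begin{align*}
\mu(A\times A)+\mu(B\times B)-\mu((A\cup B)\times(A\cup B))&=-(\mu(A\times B)+\mu(B\times A)),\\
\mu(A\times A^c)+\mu(B\times B^c)-\mu((A\cup B)\times(A\cup B)^c)&=\mu(A\times B)+\mu(B\times A),
\end{align*}
together with the analogous identity for the swapped products (which multiplies the coefficient $f(0,1)$). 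The $g(1)$-contribution cancels because $\nu(A)+\nu(B)-\nu(A\cup B)=0$. Weighting the three $\mu$-identities by $f(1,1)$, $f(1,0)$, $f(0,1)$ respectively and summing delivers \eqref{3b} with $C_f=f(1,0)+f(0,1)-f(1,1)$.

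The proof is purely algebraic and amounts to careful bookkeeping of product-set decompositions; no limits or estimates are needed beyond those already encoded in Proposition \ref{prop1}. The only minor subtlety is to state the normalization $f(0,0)=g(0)=0$ explicitly, so that the definition of $C_f$ matches the coefficient actually produced by the computation in front of $\mu(A\times B)+\mu(B\times A)$.
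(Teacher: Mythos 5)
Your proof is correct and takes essentially the same route as the paper's: after the normalization $f(0,0)=g(0)=0$ (which the paper's own proof inherits tacitly from the proof of Proposition \ref{prop1}), both arguments evaluate \eqref{2} on $A$, $B$, and $A\cup B$ and form the combination $\Phi_p(|A|)+\Phi_p(|B|)-\Phi_p(|A\cup B|)$, the paper leaving the product-set bookkeeping implicit where you spell it out. Your explicit treatment of the normalization is in fact slightly more careful than the paper's, since the constant $C_f$ as stated is only correct once $f(0,0)=0$ (for un-normalized $f$ the computation produces the coefficient $f(1,0)+f(0,1)-f(1,1)-f(0,0)$).
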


\begin{proof} From \eqref{2} and \eqref{33} we have 
\begin{eqnarray}\label{3}\nonumber
&&\Phi_p(|A|)= f(1,1)\mu(A\times A)+ f(1, 0)\mu(A\times(\Omega\setminus A))\\&&
\hskip4.5cm+f(0,1)\mu((\Omega\setminus A)\times A) +g(1)\nu(A),
\end{eqnarray}
and analogous formulas for $\Phi_p(|B|)$ and $\Phi_p(|A|+|B|)=\Phi_p(|A\cup B|)$,
from which the claim follows.
\end{proof}

\begin{proposition}\label{prop3} If there exists a bounded Radon measure $\mu$ on $\Omega\times\Omega$ such that \eqref{3b} holds, then $p=2$.\end{proposition}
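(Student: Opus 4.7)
The plan is to exploit the fact that $\mu$ is a genuine measure, hence additive on disjoint sets, to extract a functional equation for $\Phi_p$, and then to show that this equation, combined with the explicit form \eqref{fp}, forces $p=2$.

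First I would note that the constant $C_f$ must be nonzero: otherwise \eqref{3b} would give $\Phi_p(|A|)+\Phi_p(|B|)=\Phi_p(|A|+|B|)$ for all disjoint open $A,B\subset\Omega$, and since any $a,b\ge 0$ with $a+b\le|\Omega|$ are realized by such a pair, continuity of $\Phi_p$ combined with $\Phi_p(0)=\Phi_p(|\Omega|)=0$ would force $\Phi_p\equiv 0$, contradicting $\Phi_p(|\Omega|/2)>0$.

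Next I would take three pairwise disjoint open sets $A,B,C\subset\Omega$ and apply \eqref{3b} to the pairs $(A,C)$, $(B,C)$, and $(A\cup B,C)$. Since $\mu((A\cup B)\times C)=\mu(A\times C)+\mu(B\times C)$ (and likewise with the factors swapped), adding the first two identities and subtracting the third causes the $\mu$-terms to cancel, leaving
\begin{equation*}
\Phi_p(a+b)+\Phi_p(a+c)+\Phi_p(b+c)-\Phi_p(a+b+c)=\Phi_p(a)+\Phi_p(b)+\Phi_p(c),
\end{equation*}
where $a=|A|$, $b=|B|$, $c=|C|$. Since every admissible triple is realizable by such disjoint open subsets of $\Omega$, the identity holds for all $a,b,c\ge 0$ with $a+b+c\le|\Omega|$. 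Setting $\psi(s,t):=\Phi_p(s+t)-\Phi_p(s)-\Phi_p(t)$, this becomes the Cauchy equation $\psi(a+b,c)=\psi(a,c)+\psi(b,c)$ with $\psi(0,c)=0$; continuity of $\Phi_p$ then gives $\psi(a,c)=a\,h(c)$, and the symmetry $\psi(a,c)=\psi(c,a)$ forces $h(c)=\alpha c$ for some $\alpha\in\mathbb R$.

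Consequently $\Phi_p(s)-\tfrac{\alpha}{2}s^2$ is continuous and additive on $[0,|\Omega|]$, hence linear, and combining with $\Phi_p(0)=\Phi_p(|\Omega|)=0$ yields $\Phi_p(s)=\tfrac{\gamma}{2}\,s(|\Omega|-s)$ for some $\gamma\ge 0$. Comparing with \eqref{fp}, this requires $\bigl(s^{1/(p-1)}+(|\Omega|-s)^{1/(p-1)}\bigr)^{p-1}$ to be independent of $s\in(0,|\Omega|)$; its limit as $s\to 0^+$ equals $|\Omega|$, whereas at $s=|\Omega|/2$ it equals $2^{p-2}|\Omega|$, so $2^{p-2}=1$ and $p=2$. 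The main obstacle I foresee is the derivation of the functional equation: it relies crucially on both the additivity of $\mu$ and the freedom to realize every admissible triple of measure values by disjoint open subsets of $\Omega$; once this is in place, what remains is a classical Cauchy-equation argument and a one-line evaluation of \eqref{fp}.
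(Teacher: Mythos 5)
Your proposal is correct and follows essentially the same route as the paper: both exploit the additivity of $\mu$ over three pairwise disjoint open sets (whose measures can be prescribed arbitrarily by hyperplane cuts of $\Omega$) to cancel the measure terms in \eqref{3b}, deduce that $\Phi_p(s+t)-\Phi_p(s)-\Phi_p(t)$ is bilinear, conclude that $\Phi_p(s)=\kappa\,s(|\Omega|-s)$, and then force $p=2$ by comparing the values of $\bigl(s^{1/(p-1)}+(|\Omega|-s)^{1/(p-1)}\bigr)^{p-1}$ at $s\to 0^+$ and $s=|\Omega|/2$. The only differences are minor: you solve the functional equation via Cauchy's equation plus symmetry, where the paper instead uses the smoothness of $\Phi_p$ to get $\Phi_p''$ constant (your variant is slightly more self-contained), and your preliminary step showing $C_f\neq 0$ is never actually used, since the $\mu$-terms cancel regardless of the value of $C_f$.
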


\begin{proof}
Take $A_1$, $A_2$, and $B$ disjoint open subsets of $\Omega$, and let $s_1=|A_1|$,  $s_2=|A_2|$, and $t=|B|$.
From \eqref{3b} we then have
$$
\Phi_p(s_1+s_2)+\Phi_p(t)- \Phi_p(s_1+s_2+t)=C_f( \mu((A_1\cup A_2)\times B) + \mu(B\times (A_1\cup A_2)))
$$
$$=C_f(\mu(A_1\times B)+ \mu(B\times A_1) +\mu(A_2\times B)   + \mu(B\times A_2))
$$
$$=\Phi_p(s_1)+\Phi_p(t)- \Phi_p(s_1+t) +\Phi_p(s_2)+\Phi_p(t)- \Phi_p(s_2+t),
$$
or, equivalently, that for every fixed $t\in(0,|\Omega|)$ the function
$$
g(s)= \Phi_p(s)+\Phi_p(t)- \Phi_p(s+t)
$$
is additive on $(0,|\Omega|-t)$, which implies that there exists a constant $c_t$ such that $g(s)= c_t s$. 
In particular, taking into account the differentiability of $\Phi_p$, we have $$\Phi''_p(s)- \Phi''_p(s+t)=g''(s)= 0\hbox{ for all } s, t\in (0,|\Omega|) \hbox{ such that } s+t< |\Omega|.
$$
This implies that $\Phi''_p$ is constant, so that it equals a second-order polynomial $P$.

It is now convenient to write $\Phi_p(s)= s(|\Omega|-s) (h_p(s))^{1-p}$, where 
$$
h_p(s)= s^{1/(p-1)}+(|\Omega|-s)^{1/(p-1)}.
$$
Since $h_p(s)\neq0$ we have $P(s)=0$ if and only if $s=0$ or $s=|\Omega|$, so that $P(s) = \kappa s(|\Omega|-s)$ for some constant $\kappa$.
This implies that $h_p(s)=\kappa$ for every $s\in (0,|\Omega|)$ and then also for $s=0$ and $s=|\Omega|$ by continuity. In particular, this gives 
$$
|\Omega|^{1/(p-1)}=h_p(0)= h_p\Bigl({|\Omega|\over 2}\Bigr)= 2\Bigl({|\Omega|\over 2}\Bigr)^{1/(p-1)},
$$
which holds only if $p=2$.
\end{proof}
%
%
%
%
%
%
%
%
%
%
%

Combining the previous results, we are now in a position to prove that $F$ cannot be represented in an integral form when $1<p\le d$ and $p\neq2$.
\begin{corollary}\label{cor: main} Let $1<p\le d$ and let $F$ be the  $\Gamma$-limit, with respect to the weak $W^{1,p}_0$-convergence, of the sequence $F_k$ defined by \eqref{effe-k}.
Suppose that there exist two real valued continuous functions  $f$ and $g$, defined on $\mathbb R^2$ and $ \mathbb R$, 
 and two positive bounded Radon measures on $\Omega\times\Omega$ and  $\Omega$, respectively, such that
\begin{equation}\label{rappresentazione}
F(u)=\int_{\Omega\times\Omega}f(u(x),u(y))d\mu(x,y)+\int_\Omega g(u(x))d\nu(x)+\int_\Omega|\nabla u(x)|^p\dx
\end{equation}
for every $u\in C^\infty_c(\Omega)$. Then $p=2$ and in this case we have $\mu={1\over 2|\Omega|}\mathcal L^{2d}$ and $\nu=0$, while  $f(s,t)=|s-t|^2$ for every $s, t\in\mathbb R$.
\end{corollary}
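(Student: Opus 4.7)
The plan is to chain the three preceding propositions together with Theorem~\ref{Gamma counterexample}. Starting from \eqref{rappresentazione}, I would subtract $\int_\Omega|\nabla u(x)|^p\,dx$ from both sides and invoke Theorem~\ref{Gamma counterexample} to rewrite the left-hand side as $\int_\Omega|u(x)-m_p(u)|^p\,dx$; this is precisely the identity \eqref{1} of Proposition~\ref{prop1} on $C^\infty_c(\Omega)$.

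Proposition~\ref{prop1} then extends this identity to $u=1_A$ for every open $A\subset\Omega$, and Proposition~\ref{prop2} converts it into the relation \eqref{3b} for all disjoint open $A,B\subset\Omega$, with constant $C_f=f(1,0)+f(0,1)-f(1,1)$. Before invoking Proposition~\ref{prop3} I would dispose of the edge case $C_f=0$: then \eqref{3b} would force $\Phi_p$ to be additive on the set of measures of open subsets of $\Omega$, namely $(0,|\Omega|)$, hence linear, and the boundary values $\Phi_p(0)=\Phi_p(|\Omega|)=0$ would make $\Phi_p$ identically zero, contradicting \eqref{fp}. So $C_f\neq 0$, the measure $\mu$ satisfies the hypothesis of Proposition~\ref{prop3}, and that proposition immediately forces $p=2$.

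Finally, once $p=2$ has been established, the identity \eqref{intro10} from the introduction,
$$
\int_\Omega|u(x)-m_2(u)|^2\,dx=\frac{1}{2|\Omega|}\int_{\Omega\times\Omega}|u(x)-u(y)|^2\,dx\,dy,
$$
shows that the choice $\mu=\frac{1}{2|\Omega|}\mathcal{L}^{2d}$, $\nu=0$, and $f(s,t)=|s-t|^2$ yields a valid integral representation of $F$ on $C^\infty_c(\Omega)$, as asserted. Since Propositions~\ref{prop1}, \ref{prop2}, and \ref{prop3} do all the heavy lifting, no substantial obstacle remains for the corollary itself; the only real check is the one I have just described for the case $C_f=0$.
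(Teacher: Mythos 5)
Your proposal is correct and follows essentially the same route as the paper's own proof: Theorem~\ref{Gamma counterexample} identifies $F$ with \eqref{effe}, Propositions~\ref{prop1}, \ref{prop2}, and \ref{prop3} are chained to force $p=2$, and the computation \eqref{intro10} together with \eqref{effe} gives the stated $f$, $\mu$, $\nu$ in the case $p=2$. The only deviation is your separate disposal of the case $C_f=0$, which is correct but redundant: Proposition~\ref{prop3} as stated imposes no condition on $C_f$, and its proof never divides by $C_f$ --- when $C_f=0$ relation \eqref{3b} just says that $\Phi_p$ is additive, and the proposition's additivity argument (which is exactly the contradiction you describe) applies verbatim.
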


\begin{proof}
By Theorem \ref{Gamma counterexample} the functional $F$ is given by \eqref{effe}. By \eqref{rappresentazione} this implies that the assumptions of Proposition \ref{prop1} are satisfied,
and by Proposition \ref{prop2} we can apply Proposition \ref{prop3}, which gives $p=2$. The explicit form of of $f$, $\mu$, and $\nu$ follows from \eqref{intro10} and \eqref{effe}. 
\end{proof}

\bigskip

\noindent \textsc{Acknowledgements.}
 This paper is based on work supported by the National Research Project (PRIN  2017BTM7SN) 
 "Variational Methods for Stationary and Evolution Problems with Singularities and 
 Interfaces", funded by the Italian Ministry of University and Research. 
The authors are members of the Gruppo Nazionale per 
l'Analisi Matematica, la Probabilit\`a e le loro Applicazioni (GNAMPA) of the 
Istituto Nazionale di Alta Matematica (INdAM).

\end{document}